\theoremstyle{definition} 
\newtheorem{thm}{Theorem}[section]
\newtheorem{cor}[thm]{Corollary}
\newtheorem{lem}[thm]{Lemma}
\newtheorem{conj}[thm]{Conjecture}
\theoremstyle{definition}
\newtheorem{rmk}[thm]{Remark}
\theoremstyle{definition}
\theoremstyle{remark}
\newcommand{\GL}{\mathrm{GL}}
\newcommand{\Aut}{\mathrm{Aut}}
\newcommand{\bZ}{\mathbb{Z}}
\newcommand{\bQ}{\mathbb{Q}}
\newcommand{\bF}{\mathbb{F}}
\newcommand{\mr}{\mathrm}
\newcommand{\Prob}{\mathrm{Prob}}
\newcommand{\M}{\mr{M}}
\newcommand{\rk}{\mr{rank}}
\newcommand{\cok}{\mathrm{cok}}
\newcommand{\diag}{\mathrm{diag}}
\newcommand{\ra}{\rightarrow}
\newcommand{\tra}{\twoheadrightarrow}
\newcommand{\lt}{\left}
\newcommand{\rt}{\right}
\newcommand{\op}{\oplus}
\newcommand{\be}{\begin{enumerate}}
\newcommand{\ee}{\end{enumerate}}
\newcommand{\bi}{\begin{itemize}}
\newcommand{\ei}{\end{itemize}}
\newcommand{\bbm}{\begin{bmatrix}}
\newcommand{\ebm}{\end{bmatrix}}
\numberwithin{equation}{section}
\begin{document}

\title[Polynomial equations and cokernels of random matrices modulo a prime power]{Polynomial equations for matrices over integers modulo a prime power and the cokernel of a random matrix}
\date{\today}
\author{Gilyoung Cheong, Yunqi Liang, and Michael Strand}
\address{Department of Mathematics, University of California, Irvine, 340 Rowland Hall, Irvine, CA 92697.}
\email{gilyounc@uci.edu}

\begin{abstract}
Given a prime $p$ and a positive integer $k$, let $\M_{n}(\bZ/p^{k}\bZ)$ be the ring of $n \times n$ matrices over $\bZ/p^{k}\bZ$. We consider the number of solutions $X \in \M_{n}(\bZ/p^{k}\bZ)$ to the polynomial equation $P(X) = 0$, where $P(t)$ is a monic polynomial in $(\bZ/p^{k}\bZ)[t]$ whose reduction modulo $p$ is square-free over the finite field $\bF_{p}$ of $p$ elements. Noting that $P(X) = 0$ if and only if $\cok(P(X)) \simeq (\bZ/p^{k}\bZ)^{n}$, we give a conjectural generalization of counting solutions to $P(X) = 0$ as the distribution of the cokernel $\cok(P(X))$ of $P(X)$ up to isomorphisms, where $X$ is a uniform random matrix in $\M_{n}(\bZ/p^{k}\bZ)$. This distribution involves an explicit formula when we fix the residue class of $X$ modulo $p$. We prove this conjecture for the special case when the image of $P(t)$ in $\bF_{p}[t]$ modulo $p$ is irreducible. We explain how the distribution we obtain is closely related to the Cohen-Lenstra distribution. Our proof involves algebraic and combinatorial arguments in linear algebra over $\bZ/p^{k}\bZ$ and builds upon a previous work of Cheong and Kaplan. 
\end{abstract}

\maketitle

\section{Introduction}

\hspace{3mm} Let $\M_{n}(A)$ be the ring of $n \times n$ matrices over a commutative ring $A$ with unity. When $A$ has finite size, it is natural to ask how many solutions $X$ in $\M_{n}(A)$ there are for the polynomial equation $P(X) = 0$, where $P(t) \in A[t]$ is a monic polynomial. Whether $X \in \M_{n}(A)$ satisfies $P(X) = 0$ is invariant under conjugation by $\GL_{n}(A)$, so this problem comes down to classifying similarity classes in $\M_{n}(A)$ satisfying the given polynomial equation. This classification is well-known when $A$ is a field, and in this case, the number of the solutions $X \in \M_{n}(A)$ to $P(X) = 0$ was first computed by Hodges \cite{Hod}. When $A$ is a PID (not necessarily of finite size) and $P(t) \in A[t]$ is a degree $2$ polynomial that splits over $A$, Dafni and Vaserstein \cite{DV} classified all similarity classes of $X \in \M_{n}(A)$ such that $P(X) = 0$. However, this problem is open for various choices of $A$ and $P(t) \in A[t]$ since the classification of similarity classes in $\M_{n}(A)$ is open for general $A$.

\

\hspace{3mm} In this paper, we first study this problem of counting $X \in \M_{n}(A)$ satisfying $P(X) = 0$ when $A = \bZ/p^{k}\bZ$ for any $k \in \bZ_{\geq 1}$ and a monic polynomial $P(t) \in (\bZ/p^{k}\bZ)[t]$ whose image in $\bF_{p}[t]$ modulo $p$ is square-free, where $\bF_{p} = \bZ/p\bZ$, the finite field of prime size $p$. In this case, we shall see that it is easy to classify the similarity classes of $X \in \M_{n}(\bZ/p^{k}\bZ)$ satisfying $P(X) = 0$, which leads us to the answer given by Theorem \ref{eqn}. Moreover, noting that $P(X) = 0$ if and only if the cokernel $\cok(P(X))$ of $P(X)$ is isomoprhic to $(\bZ/p^{k}\bZ)^{n}$, we may interpret the problem of counting solutions to $P(X) = 0$ as a special case of counting $X \in \M_{n}(\bZ/p^{k}\bZ)$ such that $\cok(P(X)) \simeq G$ for a fixed finite abelian $p$-group $G$. We give a conjectural answer to this count, and with a fixed residue class $\bar{X} \in \M_{n}(\bF_{p})$ of $X$ modulo $p$, the conjecture involves an exact formula that does not depend on $\bar{X}$. (See Conjecture \ref{conj}.) Our main result resolves the conjecture when the image of $P(t) \in (\bZ/p^{k}\bZ)[t]$ in $\bF_{p}[t]$ modulo $p$ is irreducible. (See Theorem \ref{main}.) Resolving the full conjecture is work in progress, which builds upon the techniques from this paper. For a general monic polynomial $P(t) \in (\bZ/p^{k}\bZ)[t]$ that is not square-free modulo $p$, we do not even have a conjecture, which leaves an interesting direction for future work.

\

\hspace{3mm} We present our main result and conjecture in the form of probability for a uniform random matrix $X \in \M_{n}(\bZ/p^{k}\bZ)$ diving our count by $|\M_{n}(\bZ/p^{k}\bZ)| = p^{kn^{2}}$. If $k \in \bZ_{\geq 1}$ satisfies $p^{k-1}G = 0$, our main result computes the probability that $\cok(P(X)) \simeq G$, where $X$ is chosen from $\M_{n}(\bZ_{p})$ with the Haar probability measure, where $\bZ_{p}$ is the ring of $p$-adic integers. When we let $n \ra \infty$, this distribution weakly converges to the Cohen-Lenstra distribution of modules over $\bZ_{p}[t]/(P(t))$ with finite size, which gives a different proof of a result due to Lee \cite{Lee}. (See Theorem \ref{lee}.)

\

\hspace{3mm} For the rest of this paper, we fix a prime $p$. Before we state our main result and conjecture, we give the statement regarding the number of $X \in \M_{n}(\bZ/p^{k}\bZ)$ such that $P(X) = 0$, when $P(t) \in (\bZ/p^{k}\bZ)[t]$ is a monic polynomial whose image in $\bF_{p}[t]$ modulo $p$ is square-free. 

\

\begin{thm}\label{eqn} Let $k \in \bZ_{\geq 1}$. Consider a monic polynomial $P(t) \in (\bZ/{p}^{k}\bZ)[t]$ whose image in $\bF_{p}[t]$ modulo $p$ is square-free so that by Hensel's lemma, we have
\[P(t) = P_{1}(t) \cdots P_{l}(t)\]
for some monic polynomials $P_{1}(t), \dots, P_{l}(t) \in (\bZ/p^{k}\bZ)[t]$ whose images in $\bF_{p}[t]$ modulo $p$ are distinct and irreducible. For any $n \in \bZ_{\geq 1}$, we have
\[\#\{X \in \M_{n}(\bZ/p^{k}\bZ) : P(X) = 0\} = p^{kn^{2}} \prod_{i=1}^{n}(1 - p^{-i}) \sum_{\substack{(n_{1}, \dots, n_{l}) \in (\bZ_{\geq 0})^{n}: \\ n_{1}\deg(P_{1}) + \cdots + n_{l}\deg(P_{l}) = n}} \prod_{j=1}^{l} \frac{1}{p^{k\deg(P_{j})n_{j}^{2}}\prod_{i=1}^{n_{j}}(1 - p^{-i\deg(P_{j})})}.\]
Equivalently, dividing by $|\M_{n}(\bZ/p^{k}\bZ)| = p^{kn^{2}}$, we have
\[\underset{X \in \M_{n}(\bZ/p^{k}\bZ)}{\Prob}(P(X) = 0) = \prod_{i=1}^{n}(1 - p^{-i}) \sum_{\substack{(n_{1}, \dots, n_{l}) \in (\bZ_{\geq 0})^{n}: \\ n_{1}\deg(P_{1}) + \cdots + n_{l}\deg(P_{l}) = n}} \prod_{j=1}^{l} \frac{1}{p^{k\deg(P_{j})n_{j}^{2}}\prod_{i=1}^{n_{j}}(1 - p^{-i\deg(P_{j})})}.\]
\end{thm}

\

\begin{rmk} When $k = 1$, Theorem \ref{eqn} is due to Hodges (\cite[(2.5) and (3.4)]{Hod} or \cite[Theorem 3.1]{Hua}) for any monic polynomial $P(t) \in \bF_{p}[t]$ even without the square-free condition. For $k \geq 2$, we are unable to find a generalization of the result of Hodges unless we assume that $P(t)$ is square-free modulo $p$ due to our inability to classify similarity classes of $n \times n$ matrices over $\bZ/p^{k}\bZ$. Some general criteria to tell what matrices are similar to each other are available due to Guralnick \cite[Theorem 3.2]{Gur}, but unfortunately, they do not admit an explicit classification of the similarity classes suitable for our purpose. We also remark that the proof of Theorem \ref{eqn} is quite straightforward because it is easy to classify similarity classes of $X \in \M_{n}(\bZ/p^{k}\bZ)$ such that $P(X) = 0$ if $P(t)$ is square-free modulo $p$. Nevertheless, since we could not find this particular statement in the literature, we include a proof of Theorem \ref{eqn} in Section \ref{exp}. What we find nontrivial is a further conjectural generalization of Theorem \ref{eqn}, which we give as Conjecture \ref{conj}. When $P(t)$ is irreducible modulo $p$, this conjecture is proven as our main theorem, stated as Theorem \ref{main}. This generalizes the corresponding special case of Theorem \ref{eqn}.
\end{rmk}

\

\hspace{3mm} Note that for any $X \in \M_{n}(\bZ/p^{k}\bZ)$, we have $P(X) = 0$ if and only if the cokernel $\cok(P(X))$ of $P(X)$ is isomorphic to $(\bZ/p^{k}\bZ)^{n}$. Hence, Theorem \ref{eqn} computes the probability that a uniform random matrix $X \in \M_{n}(\bZ/p^{k}\bZ)$ satisfies $\cok(P(X)) \simeq (\bZ/p^{k}\bZ)^{n}$ as abelian groups. Having this in mind, we state our main theorem, which is a generalization of Theorem \ref{eqn} when $P(t)$ is irreducible modulo $p$. We denote by $\Aut_{R}(G)$ the group of $R$-linear automorphisms of a module $G$ over a commutative ring $R$ with unity.

\

\begin{thm}[Main theorem]\label{main} Let $k \in \bZ_{\geq 1}$ and consider a monic polynomial $P(t) \in (\bZ/p^{k}\bZ)[t]$ whose image in $\bF_{p}[t]$ modulo $p$ is irreducible. Let
\bi
	\item $R := (\bZ/p^{k}\bZ)[t]/(P(t))$,
	\item $G$ be a finite size $R$-module,
	\item $q := p^{\deg(P)}$,
	\item $\bF_{q} := \bF_{p}[t]/(P(t))$,
	\item $r_{q}(G) := \dim_{\bF_{q}}(G/pG)$, and
	\item $u_{q}(G) := \dim_{\bF_{q}}(p^{k-1}G)$.
\ei

Fix $n \in \bZ_{\geq 1}$. For any $\bar{X} \in \M_{n}(\bF_{p})$ such that $\cok(P(\bar{X})) \simeq G/pG$, we have
\[\underset{X \in \M_{n}(\bZ/p^{k}\bZ)}{\Prob}\left(\begin{array}{c}
\cok(P(X)) \simeq G \text{ and}\\
X \equiv \bar{X} \pmod{p}
\end{array}\right)
= \dfrac{p^{\deg(P)r_{q}(G)^{2} - n^{2}}}{|\Aut_{R}(G)|\prod_{i=1}^{u_{q}(G)}(1 - p^{-i\deg(P)})}\displaystyle\prod_{i=1}^{r_{q}(G)}(1 - p^{-i\deg(P)})^{2}.\]
\end{thm}

\

\begin{rmk} In Theorem \ref{main}, the condition $\cok(\bar{X}) \simeq G/pG$ is inevitable because if it is not satisfied, then the probability in Theorem \ref{main} is automatically $0$. Note that $\cok(P(X))$ is a module over $R = (\bZ/p^{k}\bZ)[t]/(P(t))$ for any $X \in \M_{n}(\bZ/p^{k}\bZ)$, where the action of $\bar{t} \in R$ is given as the left multiplication by $X$, so it is inevitable to assume that the given finite abelian $p$-group $G$ is an $R$-module. Since $p^{k}G = 0$, the action of $pR$ on $p^{k-1}G$ is trivial, so $p^{k-1}G$ is a vector space over $R/pR = \bF_{p}[t]/(P(t)) = \bF_{q}$, which is a field of $q = p^{\deg(P)}$ elements. Thus, the notation $u_{q}(G) = \dim_{\bF_{q}}(p^{k-1}G)$ makes sense. We note that $p^{k-1}G = 0$ if and only if $u_{q}(G) = 0$.
\end{rmk}

\

\hspace{3mm} We now state a conjectural generalization of Theorems \ref{eqn} and \ref{main}:

\begin{conj}\label{conj} Let $k \in \bZ_{\geq 1}$. Consider a monic polynomial $P(t) \in (\bZ/{p}^{k}\bZ)[t]$ whose image in $\bF_{p}[t]$ modulo $p$ is square-free so that by Hensel's lemma, we have
\[P(t) = P_{1}(t) \cdots P_{l}(t)\]
for some monic polynomials $P_{1}(t), \dots, P_{l}(t) \in (\bZ/p^{k}\bZ)[t]$ whose images in $\bF_{p}[t]$ modulo $p$ are distinct and irreducible.  Let
\bi
	\item $R_{j} := (\bZ/p^{k}\bZ)[t]/(P_{j}(t))$,
	\item $G_{j}$ be a finite size $R_{j}$-module,
	\item $d_{j} := \deg(P_{j})$,
	\item $q_{j} := p^{d_{j}}$,
	\item $\bF_{q_{j}} := \bF_{p}[t]/(P_{j}(t))$,
	\item $r_{q_{j}}(G_{j}) := \dim_{\bF_{q_{j}}}(G_{j}/pG_{j})$, and
	\item $u_{q_{j}}(G_{j}) := \dim_{\bF_{q_{j}}}(p^{k-1}G_{j})$.	
\ei

Fix $n \in \bZ_{\geq 1}$. For any $\bar{X} \in \M_{n}(\bF_{p})$ such that $\cok(P_{j}(\bar{X})) \simeq G_{j}/pG_{j}$, we have
\[\underset{X \in \M_{n}(\bZ/p^{k}\bZ)}{\Prob}\left(\begin{array}{c}
\cok(P_{j}(X)) \simeq G_{j} \\
\text{for } 1 \leq j \leq l \text{ and} \\
X \equiv \bar{X} \pmod{p}
\end{array}\right)
= p^{- n^{2}}\displaystyle\prod_{j=1}^{l}\dfrac{p^{d_{j} r_{q_{j}}(G_{j})^{2}}}{|\Aut_{R_{j}}(G_{j})|\prod_{i=1}^{u_{q_{j}}(G_{j})}(1 - p^{-id_{j}})}\displaystyle\prod_{i=1}^{r_{q_{j}}(G_{j})}(1 - p^{-id_{j}})^{2}.\]
\end{conj}

\

\begin{rmk} We briefly explain how Conjecture \ref{conj} generalizes Theorem \ref{eqn}. Let $G$ be a finite size module over $(\bZ/p^{k}\bZ)[t]/(P(t))$. Then it is always the case that $G \simeq G_{1} \times \cdots \times G_{l}$ as abelian groups, where each $G_{j}$ is an $R_{j}$-module. Thus, we have
\begin{align*}
\underset{X \in \M_{n}(\bZ/p^{k}\bZ)}{\Prob}(
\cok(P(X)) \simeq G) &= \sum_{\bar{X} \in \M_{n}(\bF_{p})}\underset{X \in \M_{n}(\bZ/p^{k}\bZ)}{\Prob}\left(\begin{array}{c}
\cok(P(X)) \simeq G \\
X \equiv \bar{X} \pmod{p}
\end{array}\right) \\
&= \sum_{\bar{X} \in \M_{n}(\bF_{p})}\sum_{\substack{([G_{1}], \dots, [G_{l}]): \\ 
G_{1} \times \cdots \times G_{l} \simeq G
}} \underset{X \in \M_{n}(\bZ/p^{k}\bZ)}{\Prob}\left(\begin{array}{c}
\cok(P_{j}(X)) \simeq G_{j} \\
\text{for } 1 \leq j \leq l \text{ and} \\
X \equiv \bar{X} \pmod{p}
\end{array}\right),
\end{align*}
where each $[G_{j}]$ for the sum is the isomorphism class of a finite abelian $p$-group $G_{j}$ that has an $R_{j}$-module structure. Theorem \ref{eqn} is the case when $G = (\bZ/p^{k}\bZ)^{n}$, and for this case, possibilities of $G_{j}$ that can appear in the above sum are quite restricted, which helps us deduce the conclusion of Theorem \ref{eqn}. (We give a more detailed proof in Section \ref{exp}.)
\end{rmk}

\

\subsection{Status of Conjecture \ref{conj}} The first case of Conjecture \ref{conj} is due to Friedman and Washington \cite[p.7, ``$\#_{H}(\bar{R})$'']{FW} when $l=1$ and $\deg(P) = 1$ with $p^{k-1}G = 0$ (i.e., $u_{q}(G) = 0$). The first author and Huang \cite[p.401]{CH} resolved more cases of Conjecture \ref{conj} when $G_{1} = \cdots = G_{l-1} = 0$ and $\deg(P_{l}) = 1$ with $p^{k-1}G = 0$. Later, the first author and Kaplan \cite{CK} generalized this and showed that Conjecture \ref{conj} is true as long as $\deg(P_{1}), \deg(P_{2}), \dots, \deg(P_{l}) \leq 2$ with $p^{k-1}G = 0$, and following a similar argument to the proof of Theorem \ref{main}, the result in \cite{CK} can be generalized for $G$ such that $p^{k-1}G \neq 0$ (i.e., $u_{q}(G) > 0$). Even for the special case $P(t) = t$, proving Theorem \ref{main} is quite tricky when $p^{k-1}G \neq 0$. For example, the proof from Friedman and Washington \cite[p.236]{FW} using the Haar probability measure on $\M_{n}(\bZ_{p})$, where $\bZ_{p}$ is the ring of $p$-adic integers, works only when $p^{k-1}G = 0$ but does not apply when $p^{k-1}G \neq 0$. Our proof (i.e., proof of Lemma \ref{R}) comes from a careful modification of a counting argument due to the first author and Kaplan, which can be found in the proof of \cite[Lemma 3.8]{CK}.

\

\hspace{3mm} In any of the previous results mentioned above, the authors did not consider the case $p^{k-1}G \neq 0$ of Theorem \ref{main} (or Conjecture \ref{conj}) because they were only interested in the distribution of $\cok(P(X))$, where $X$ is an $n \times n$ Haar-random matrix over $\bZ_{p}$. However, the case $p^{k-1}G \neq 0$ is interesting because counting solutions $X \in \M_{n}(\bZ/p^{k}\bZ)$ to $P(X) = 0$ corresponds to the case $G = (\bZ/p^{k}\bZ)^{n}$, where we have $p^{k-1}G \neq 0$. Building upon the techniques from this paper for the proof of Theorem \ref{main}, proving Conjecture \ref{conj} in full generality is joint work in progress by the first author, Kaplan, and Huang.

\

\subsection{Generalization of counting square matrices of specific rank over $\bF_{p}$} The following is an interesting corollary of Theorem \ref{main} by taking $P(t) = t$:

\begin{cor}\label{app1} Fix any $k \in \bZ_{\geq 1}$ and a finite abeian $p$-group $G$ such that $p^{k}G = 0$. Writing $r_{p}(G) := \dim_{\bF_{p}}(G/pG)$ and $u_{p}(G) := \dim_{\bF_{p}}(p^{k-1}G)$, we have
\[\underset{X \in \M_{n}(\bZ/p^{k}\bZ)}{\Prob}(\cok(X) \simeq G)
= \dfrac{1}{|\Aut_{\bZ}(G)|}\displaystyle\lt(\prod_{i=u_{p}(G) + 1}^{n}(1 - p^{-i})\rt) \lt(\prod_{i=n - r_{p}(G) + 1}^{n}(1 - p^{-i})\rt)\]
\end{cor}

\

\hspace{3mm} When $k = 1$ in Corollary \ref{app1}, we have $pG = 0$, so $G \simeq \bF_{p}^{r}$ for some $0 \leq r \leq n$, which implies that 
\bi
	\item $r_{p}(G) = r$, 
	\item $u_{p}(G) = r$, and
	\item $|\Aut_{\bZ}(G)| = |\GL_{r}(\bF_{p})| = p^{r^{2}}\prod_{i=1}^{r}(1 - p^{-i})$. 
\ei
In this case, for any $\bar{X} \in \M_{n}(\bF_{p})$, saying $\cok(\bar{X}) \simeq G$ is equivalent to $\rk(\bar{X}) = n - r$, so Corollary \ref{app1} generalizes the following well-known formula:

\begin{thm}[$\bF_{p}$-matrices with specific rank]\label{rk} Given any integers $0 \leq r \leq n$, we have
\[\underset{\bar{X} \in \M_{n}(\bF_{p})}{\Prob}(\rk(\bar{X}) = n - r) = \frac{p^{- r^{2}}\prod_{i=r+1}^{n}(1 - p^{-i})^{2}}{\prod_{i=1}^{n-r}(1 - p^{-i})}.\]
\end{thm}

\

\begin{rmk} Deducing Corollary \ref{app1} from Theorem \ref{main} uses Theorem \ref{rk}, so it does not provide a new proof of Theorem \ref{rk}.
\end{rmk}

\

\subsection{Distributions of random $p$-adic integral matrices and the Cohen-Lenstra distribution} Denote by $\bZ_{p}$ the ring of $p$-adic integers. Given a finite abelian $p$-group $G$, choose any $k \in \bZ_{\geq 1}$ such that $p^{k-1}G = 0$. Then Theorem \ref{main} computes the probability that $\cok(P(X)) \simeq G$, where $X$ is a random matrix in $\M_{n}(\bZ_{p})$ with respect to the Haar probability measure.

\

\begin{cor}\label{app2} Let $P(t) \in \bZ_{p}[t]$ be a monic polynomial whose image in $\bF_{p}[t]$ modulo $p$ is irreducible and $G$ a finite size module over $\bZ_{p}[t]/(P(t))$. Fix $n \in \bZ_{\geq 1}$. For any $\bar{X} \in \M_{n}(\bF_{p})$ such that $\cok(P(\bar{X})) \simeq G/pG$, we have
\[\underset{X \in \M_{n}(\bZ_{p})}{\Prob}\left(\begin{array}{c}
\cok(P(X)) \simeq G \\
\text{and } X \equiv \bar{X} \pmod{p}
\end{array}\right) = \frac{p^{\deg(P)(\dim_{\bF_{p}[t]/(P(t))}(G/pG))^{2}- n^{2}}}{|\Aut_{\bZ_{p}[t]/(P(t))}(G)|}\prod_{i=1}^{\dim_{\bF_{p}[t]/(P(t))}(G/pG)}(1 - p^{-i\deg(P)})^{2}.\]
\end{cor}

\

\hspace{3mm} When we let $n \ra \infty$, Corollary \ref{app2} implies the following result of Lee \cite{Lee}:

\begin{thm}[Lee]\label{lee} Let $P(t) \in \bZ_{p}[t]$ be a monic polynomial whose image in $\bF_{p}[t]$ modulo $p$ is irreducible and $G$ a finite size module over $\bZ_{p}[t]/(P(t))$. We have
\[\lim_{n \ra \infty}\underset{X \in \M_{n}(\bZ_{p})}{\Prob}(\cok(P(X)) \simeq G) = \frac{1}{|\Aut_{\bZ_{p}[t]/(P(t))}(G)|}\prod_{i=1}^{\infty}(1 - p^{-i\deg(P)}).\]
\end{thm}

\

\begin{rmk} Theorem \ref{main} with any $k \in \bZ_{\geq 1}$ such that $p^{k-1}G = 0$ implies Corollary \ref{app2} by definition of the Haar probability measure on $\M_{n}(\bZ_{p})$. (See \cite[Lemma 4.3]{CH} for more details.) To see how Corollary \ref{app2} implies Theorem \ref{lee}, one can use the fact \cite[Lemma 2.2]{CK} that
\[\lim_{n \ra \infty}\underset{\bar{X} \in \M_{n}(\bF_{p})}{\Prob}(\cok(P(\bar{X})) \simeq G/pG) = \frac{p^{-r^{2}\deg(P)}\prod_{i=1}^{\infty}(1 - p^{-i\deg(P)})}{\prod_{i=1}^{r}(1 - p^{-i\deg(P)})^{2}},\]
where $r =  \dim_{\bF_{p}[t]/(P(t))}(G/pG)$. We also note that Lee's result is more general than Theorem \ref{lee}, which follows from Conjecture \ref{conj}. Lee's original proof is different from this approach.
\end{rmk}

\

\begin{rmk} If $P(t) \in \bZ_{p}[t]$ is a monic polynomial whose image in $\bF_{p}[t]$ modulo $p$ is irreducible, then $R = \bZ_{p}[t]/(P(t))$ is a DVR (discrete valuation ring) with the residue field $R/pR = \bF_{p}[t]/(P(t))$ of size $q = p^{\deg(P)}$. The set of isomorphism classes of finite size $R$-modules have a discrete probability distribution (i.e., a discrete probability measure) called the \textbf{Cohen--Lenstra distribution} defined by
\[\{[G]\} \mapsto \frac{1}{|\Aut_{R}(G)|}\prod_{i=1}^{\infty}(1 - p^{-i\deg(P)})\]
for every finite size $R$-module $G$, where $[G]$ is its isomorphism class.

\

\hspace{3mm} This distribution was first studied by Cohen and Lenstra in \cite{CL} to provide heuristics to predict the distribution of the $p$-part of a random imaginary quadratic extension of $\bQ$. Such heuristics are given for $\deg(P) = 1$ so that $R = \bZ_{p}$, noting that finite size $\bZ_{p}$-modules are identical to finite abelian $p$-groups. Recently, there have been numerous activities in studying distributions that are related to the Cohen--Lenstra distribution arising from random matrices (e.g., \cite{FK}, \cite{Lee}, \cite{Lee-B}, \cite{Lee-C}, \cite{NW}, \cite{Van}, \cite{Woo17}, \cite{Woo19}) over $\bZ_{p}$ or $\bF_{p}$. Our work reveals more explicit examples via Theorem \ref{main}.
\end{rmk}

\

\subsection*{Acknowledgment} We thank Alessandra Pantano for helping us initiate the project as a summer undergraduate research opportunity supervised by the first author. We thank Yifeng Huang and Myungjun Yu for helpful conversations regarding this paper. We thank Nathan Kaplan and Alison Miller for helpful comments regarding an earlier draft of this paper.

\

\section{More on Theorem \ref{eqn} and Conjecture \ref{conj}}\label{exp}

\hspace{3mm} In this section, we explain how Conjecture \ref{conj} implies Theorem \ref{eqn}. Then we give a proof of Theorem \ref{eqn} without assuming Conjecture \ref{conj}.

\

\subsection{Conjecture \ref{conj} implies Theorem \ref{eqn}} Assume the hypotheses of Theorem \ref{eqn}. Given any $X \in \M_{n}(\bZ/p^{k}\bZ)$, we note that $\cok(P(X))$ is not just a finite abelian $p$-group, but it is also a module over $R := (\bZ/p^{k}\bZ)[t]/(P(t))$. (This observation is crucial whenever $\deg(P) \geq 2$.) By the Chinese Remainder Theorem, we have a ring isomorphism
\[R \simeq R_{1} \times \cdots \times R_{l}\]
given by $(r \mod (P)) \mapsto (r \mod (P_{1}), r \mod (P_{2}), \dots, r \mod (P_{l}))$, where $R_{j} := (\bZ/p^{k}\bZ)[t]/(P_{j}(t))$. Under this isomorphism, consider
\bi
	\item $e_{1} \in R$ that corresponds to $(1 \mod (P_{1}), 0 \mod (P_{2}), 0 \mod (P_{3}), \dots, 0 \mod (P_{l}))$,
	\item $e_{2} \in R$ that corresponds to $(0 \mod (P_{1}), 1 \mod (P_{2}), 0 \mod (P_{3}), \dots, 0 \mod (P_{l}))$,
	\item and similarly for $e_{3}, \dots, e_{l} \in R$.
\ei
Note that $1 = e_{1} + \cdots + e_{l}$ in $R$ and for any $R$-module $G$, we have
\[G = G_{1} \op G_{2} \op \cdots \op G_{l},\]
where $G_{j} = e_{j}G$, a module over $R_{j}$. We have an $R_{j}$-linear isomorphism
\[\cok(P_{j}(X)) \simeq e_{j}\cok(P(X))\]
given by $(v \mod (P_{j}(X))) \mapsto e_{j}(v \mod (P(X)))$, where $v \in (\bZ/p^{k}\bZ)^{n}$, so as abelian groups, we have
\[\cok(P(X)) \simeq \cok(P_{1}(X)) \times \cdots \times \cok(P_{l}(X))\]
given by $(v \mod (P(X))) \mapsto (v \mod (P_{1}(X)), \dots, v \mod (P_{l}(X)))$.

\

\hspace{3mm} Any $R_{j}$-module is also a module over $\bZ_{p}[t]/(P_{j}(t))$ by extension of scalars. Since $\bZ_{p}[t]/(P_{j}(t))$ is a PID (as it is a DVR), this helps us classify finite abelian $p$-groups that have $R_{j}$-module structures: they are precisely the ones of the form $H^{\deg(P_{j})}$ for some finite abelian $p$-group $H$, up to isomorphisms. Moreover, it also follows that any two finite size $R_{j}$-modules are isomorphic as $R_{j}$-modules if and only they are isomorphic as abelian groups. We now explain how Conjecture \ref{conj} implies Theorem \ref{eqn}:

\

\begin{proof}[Proof that Conjecture \ref{conj} implies Theorem \ref{eqn}] Any $X \in \M_{n}(\bZ/p^{k}\bZ)$ such that $P(X) = 0$ must satisfy
\[\cok(P_{1}(X)) \times \cdots \times \cok(P_{l}(X)) \simeq \cok(P(X)) \simeq (\bZ/p^{k}\bZ)^{n}\]
as abelian groups, so for $1 \leq j \leq l$, we must have
\[\cok(P_{j}(X)) \simeq (\bZ/p^{k}\bZ)^{n_{j} \deg(P_{j})}\]
for some $n_{j} \in \bZ_{\geq 0}$ such that $n_{1}\deg(P_{1}) + \cdots + n_{l}\deg(P_{l}) = n$. If $\bar{X} \in \M_{n}(\bF_{p})$ denotes the residue of $X$ modulo $p$, then it follows that $\dim_{\bF_{p}}(\cok(P_{j}(\bar{X})) = n_{j}\deg(P_{j})$. Thus, for any fixed $\bar{X} \in \M_{n}(\bF_{p})$, we have
\begin{align*}
\underset{X \in \M_{n}(\bZ/p^{k}\bZ)}{\Prob}\left(\begin{array}{c}
P(X) = 0 \text{ and} \\
X \equiv \bar{X} \pmod{p}
\end{array}\right) &= \underset{X \in \M_{n}(\bZ/p^{k}\bZ)}{\Prob}\left(\begin{array}{c}
\cok(P(X)) \simeq (\bZ/p^{k}\bZ)^{n} \\
\text{and } X \equiv \bar{X} \pmod{p}
\end{array}\right) \\
&= \underset{X \in \M_{n}(\bZ/p^{k}\bZ)}{\Prob}\left(\begin{array}{c}
\cok(P_{j}(X)) \simeq (\bZ/p^{k}\bZ)^{n_{j}\deg(P_{j})} \\
\text{for } 1 \leq j \leq l \text{ and} \\
X \equiv \bar{X} \pmod{p}
\end{array}\right),
\end{align*}
where $n_{j} \in \bZ_{\geq 0}$ are chosen so that $\dim_{\bF_{p}}(\cok(P_{j}(\bar{X}))) = n_{j}\deg(P_{j})$. Noting that
\[\Aut_{R_{j}}((\bZ/p^{k}\bZ)^{n_{j}\deg(P_{j})}) = \Aut_{R_{j}}(R_{j}^{n_{j}}) = |\GL_{n_{j}}(R_{j})| = p^{k\deg(P_{j})n_{j}^{2}}\prod_{i=1}^{n_{j}}(1 - p^{-i\deg(P_{j})}),\]
applying Conjecture \ref{conj} with
\bi
	\item $G_{j} = R_{j}^{n_{j}} = (\bZ/p^{k}\bZ)^{n_{j}\deg(P)}$ so that
	\item $r_{q_{j}}(G_{j}) = n_{j} = u_{q_{j}}(G_{j})$
\ei
gives us
\begin{align*}
\underset{X \in \M_{n}(\bZ/p^{k}\bZ)}{\Prob}\left(\begin{array}{c}
P(X) = 0 \text{ and} \\
X \equiv \bar{X} \pmod{p}
\end{array}\right) &= p^{- n^{2}}\displaystyle\prod_{j=1}^{l}\dfrac{p^{\deg(P_{j})n_{j}^{2}}}{|\Aut_{R_{j}}(G_{j})|\prod_{i=1}^{n_{j}}(1 - p^{-i\deg(P_{j})})}\displaystyle\prod_{i=1}^{n_{j}}(1 - p^{-i\deg(P_{j})})^{2}\\
&= p^{- n^{2}}\displaystyle\prod_{j=1}^{l}\dfrac{1}{ p^{(k-1) \deg(P_{j}) n_{j}^{2}}},
\end{align*}
where $n_{j} \in \bZ_{\geq 0}$ are given as above. Summing over all $\bar{X} \in \M_{n}(\bF_{p})$ such that $P(\bar{X}) = 0$ (i.e., $\cok(P(\bar{X})) = \bF_{p}^{n}$), we have
\[\underset{X \in \M_{n}(\bZ/p^{k}\bZ)}{\Prob}(P(X) = 0) = \sum_{\substack{(n_{1}, \dots, n_{l}) \in \bZ_{\geq 0}^{n} : \\
n_{1}\deg(P_{1}) + \cdots + n_{l}\deg(P_{l}) = n
}} \underset{\bar{X} \in \M_{n}(\bF_{p})}{\Prob}\left(\begin{array}{c}
\dim_{\bF_{p}}(\cok(P_{j}(\bar{X}))) = n_{j}\deg(P_{j}) \\
\text{for } 1 \leq j \leq l
\end{array}\right)\displaystyle\prod_{j=1}^{l}\dfrac{1}{ p^{(k-1) \deg(P_{j}) n_{j}^{2}}}.\]
Finally, note that
\begin{align*}
\#\left\{\begin{array}{c}
\bar{X} \in \M_{n}(\bF_{p}) : \\
\dim_{\bF_{p}}(\cok(P_{j}(\bar{X}))) = n_{j}\deg(P_{j}) \\
\text{for } 1 \leq j \leq l
\end{array}\right\} &= \frac{|\GL_{n}(\bF_{p})|}{|\GL_{n_{1}}(\bF_{p^{\deg(P_{1})}})| \cdots |\GL_{n_{l}}(\bF_{p^{\deg(P_{l})}})|} \\
&= p^{n^{2}}\prod_{i=1}^{n}(1 - p^{-i}) \prod_{j=1}^{l}\frac{1}{p^{\deg(P_{j})n_{j}^{2}}\prod_{i=1}^{n_{j}}(1 - p^{-i\deg(P_{j})})}
\end{align*}
because the number of $\bar{X} \in \M_{n}(\bF_{p})$ with the given conditions is precisely the number of ways to decompose $\bF_{p}^{n}$ into a direct sum $V_{1} \op \cdots \op V_{l}$, where each $V_{j}$ is a vector space over $\bF_{p}[t]/(P_{j}(t))$ with dimension $n_{j}$. Hence, the conclusion of Theorem \ref{eqn} follows, which finishes the proof.
\end{proof}

\

\subsection{Proof of Theorem \ref{eqn}} Next, we give a proof of Theorem \ref{eqn} without assuming Conjecture \ref{conj}. First, note that for any finite size commutative ring $A$ with unity and a monic polynomial $P(t) \in A[t]$, we have
\begin{equation}\label{sum}
\#\left\{\begin{array}{c}
X \in \M_{n}(A) : \\
P(X) = 0
\end{array}\right\} = \sum_{\substack{[M] : M \text{ module over } A[t]/(P(t)), \\
M \simeq A^{n} \text{ as modules over }A
}}\frac{|\GL_{n}(A)|}{|\Aut_{A[t]/(P(t))}(M)|}
\end{equation}
by applying the orbit-stabilizer theorem on the conjugation action of $\GL_{n}(A)$ on $\M_{n}(A)$. In the following proof, we shall see that it is easy to enumerate $[M]$ with the hypotheses given for Theorem \ref{eqn}.

\begin{proof}[Proof of Theorem \ref{eqn}] We take $A = \bZ/p^{k}\bZ$ for the sum (\ref{sum}). We have $P(t) = P_{1}(t) \cdots P_{l}(t)$, where $P_{j}(t) \in (\bZ/p^{k}\bZ)[t]$ are monic polynomials that are distinct and irreducible modulo $p$. Let  $R := (\bZ/p^{k}\bZ)[t]/(P(t))$. (Recall also that $R_{j} = (\bZ/p^{k}\bZ)[t]/(P_{j}(t))$.) As explained in the previous subsection, all the possible ways that $(\bZ/p^{k}\bZ)^{n}$ gets an $R$-module structure (up to $R$-linear isomorphism) is via a decomposition of the form
\[(\bZ/p^{k}\bZ)^{n} \simeq (\bZ/p^{k}\bZ)^{n_{1}\deg(P_{1})} \times \cdots \times (\bZ/p^{k}\bZ)^{n_{l}\deg(P_{l})},\]
where $(n_{1}, \dots, n_{l}) \in (\bZ_{\geq 0})^{l}$ satisfies $n_{1}\deg(P_{1}) + \cdots + n_{l}\deg(P_{l}) = n$ and we view
\[(\bZ/p^{k}\bZ)^{n_{j}\deg(P_{j})} = R_{j}^{n_{j}},\]
a free $R_{j}$-module. Thus, we have
\begin{align*}
\#\left\{\begin{array}{c}
X \in \M_{n}(\bZ/p^{k}\bZ) : \\
P(X) = 0
\end{array}\right\} &= |\GL_{n}(\bZ/p^{k}\bZ)|\sum_{\substack{(n_{1}, \dots, n_{l}) \in (\bZ_{\geq 0})^{l} : \\
n_{1}\deg(P_{1}) + \cdots + n_{l}\deg(P_{l}) = n
}}\prod_{j=1}^{l}\frac{1}{|\Aut_{R_{j}}(R_{j}^{n_{j}})|} \\
&= |\GL_{n}(\bZ/p^{k}\bZ)|\sum_{\substack{(n_{1}, \dots, n_{l}) \in (\bZ_{\geq 0})^{l} : \\
n_{1}\deg(P_{1}) + \cdots + n_{l}\deg(P_{l}) = n
}}\prod_{j=1}^{l}\frac{1}{|\GL_{n_{j}}(R_{j})|} \\
&= p^{kn^{2}}\prod_{i=1}^{n}(1 - p^{-i})\sum_{\substack{(n_{1}, \dots, n_{l}) \in (\bZ_{\geq 0})^{l} : \\
n_{1}\deg(P_{1}) + \cdots + n_{l}\deg(P_{l}) = n
}}\prod_{j=1}^{l}\frac{1}{p^{k\deg(P_{j})n_{j}^{2}}\prod_{i=1}^{n_{j}}(1 - p^{-i\deg(P_{j})})},
\end{align*}
as desired.
\end{proof}

\

\section{Proof of Theorem \ref{main}}

\hspace{3mm} In this section, we prove Theorem \ref{main}. Before we get to the proof, we briefly summarize the strategy:

\subsection{Strategy for proof of Theorem \ref{main}} We utilize the module structure of $G$ over $R = (\bZ/p^{k}\bZ)[t]/(P(t))$. It turns out (Lemma \ref{Lee}) that
\[\cok(P(X)) \simeq \cok_{R}(X - \bar{t}I_{n}),\]
as $R$-modules, where the cokernel on the right-hand side is taken over $R$ and $\bar{t}$ is the image of $t$ in $R$. (As usual, we denote by $I_{n}$ the $n \times n$ identity matrix.) That is, we avoid having to consider the polynomial push-forward $P(X)$ of $X$, but the price we pay is that the resulting ring $R = (\bZ/p^{k}\bZ)[t]/(P(t))$ we work over gets more complicated than the ring $\bZ/p^{k}\bZ$ we started with. Nevertheless, as in \cite{CK}, the random matrices over $R$ turns out to have a quite ``rigid'' behavior even if we consider each summand of
\[R = \bZ/p^{k}\bZ \op \bar{t} \bZ/p^{k}\bZ \op \cdots \op \bar{t}^{d-1}\bZ/p^{k}\bZ\]
separately, where $d = \deg(P)$. This behavior, which we formulate as Lemma \ref{final}, is what we ultimately need to establish to deduce Theorem \ref{main}. In \cite{CK}, it is assumed that $\deg(P) = d \leq 2$, and this assumption admitted an easy proof of Lemma \ref{final}. However, it turns out that for $d \geq 3$, this lemma is quite nontrivial, and this is the gist of our work. (See Remark \ref{special} for further explanations.)

\

\hspace{3mm} From now on, we assume the notation in the statement of Theorem \ref{main}. We have $k \in \bZ_{\geq 1}$ such that $p^{k}G = 0$. We fix the following notation. (This is as in the statement of Theorem \ref{main} except $d$.)
\bi
	\item $d := \deg(P)$;
	\item $R := (\bZ/p^{k}\bZ)[t]/(P(t))$;
	\item $\bF_{q} := R/pR \simeq \bF_{p}[t]/(P(t))$, a finite field of $q := p^{d}$ elements (because $P(t)$ is irreducible modulo $p$);
	\item $r_{q}(G) := \dim_{\bF_{q}}(G/pG)$;
	\item $u_{q}(G) := \dim_{\bF_{q}}(p^{k-1}G)$.
\ei

Theorem \ref{main} is equivalent to
\begin{equation}\label{goal}
\#\left\{\begin{array}{c}
X \in \M_{n}(\bZ/p^{k}\bZ) : \\
\cok(P(X)) \simeq G \\
\text{and } X \equiv \bar{X} \pmod{p}
\end{array}\right\}
= p^{(k-1)n^{2}}\dfrac{q^{r_{q}(G)^{2}}\prod_{i=1}^{r_{q}(G)}(1 - q^{-i})^{2}}{|\Aut_{R}(G)|\prod_{i=1}^{u_{q}(G)}(1 - q^{-i})}. 
\end{equation}
We focus on showing this for the rest of this paper.

\

\subsection{Useful lemmas} The following lemma is due to Lee, which he used in \cite{Lee}. Although the proof, which can be found in \cite[Lemma 3.2]{CK}, is quite simple, the role that this lemma plays in proving our proof of Theorem \ref{main} is extremely critical.

\begin{lem}[Lee]\label{Lee} Consider the direct sum 
\[R = (\bZ/p^{k}\bZ)[t]/(P(t)) = \bZ/p^{k}\bZ \op \bar{t} (\bZ/p^{k}\bZ) \op \cdots \op \bar{t}^{d-1} (\bZ/p^{k}\bZ).\]

Fix any $X \in \M_{n}(\bZ/p^{k}\bZ)$. The map
\[\psi \colon \frac{(\bZ/p^{k}\bZ)^{n}}{P(X)(\bZ/p^{k}\bZ)^{n}} = \cok(P(X)) \ra \cok_{R}(X - \bar{t}I_{n}) := \frac{R^{n}}{(X - \bar{t}I_{n})R^{n}}\]

defined by $\psi([v]) = [v]$, where $v \in (\bZ/p^{k}\bZ)^{n}$, is an $R$-linear isomorphism.
\end{lem}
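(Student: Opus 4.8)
The plan is to realize both cokernels as base changes of a single module along the ring map $A[t] \ra R$, where I write $A := \bZ/p^{N+1}\bZ$ and let $M$ denote the $A[t]$-module whose underlying $A$-module is $A^{n}$ and on which $t$ acts by the matrix $X$. The key preliminary step is the standard presentation of $M$: the sequence of $A[t]$-modules
\[A[t]^{n} \xra{\,tI_{n} - X\,} A[t]^{n} \xra{\,\pi\,} M \ra 0\]
is exact, where $\pi$ sends the $i$-th standard basis vector of $A[t]^{n}$ to $e_{i} \in A^{n} = M$. Indeed $\pi$ is visibly surjective; $(tI_{n}-X)A[t]^{n} \subseteq \ker\pi$ because $t e_{i} = X e_{i}$ in $M$; and conversely any $w \in A[t]^{n}$ can be reduced modulo $(tI_{n}-X)A[t]^{n}$ to a vector in $A^{n}$ (repeatedly rewriting $t^{k}e_{i}$ as $X\cdot t^{k-1}e_{i}$), so $\pi(w) = 0$ forces $w \in (tI_{n}-X)A[t]^{n}$.

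Next I would apply the right-exact functor $-\ot_{A[t]}R$ to this presentation. The source side becomes $R^{n} \xra{\,\bar{t}I_{n}-X\,} R^{n} \ra M\ot_{A[t]}R \ra 0$, and since $\bar{t}I_{n}-X$ and $X-\bar{t}I_{n}$ have the same image this identifies $M\ot_{A[t]}R \simeq \cok_{R}(X-\bar{t}I_{n})$ as $R$-modules. On the other hand $M\ot_{A[t]}R = M\ot_{A[t]}A[t]/(P(t)) = M/P(t)M = A^{n}/P(X)A^{n} = \cok(P(X))$, since $t$ acts on $M$ as $X$; moreover the $R$-module structure $\cok(P(X))$ inherits this way is precisely the one described in the remark following Theorem \ref{main} (the action $\bar{t}\cdot[v] = [Xv]$). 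Composing the two identifications gives an $R$-linear isomorphism $\cok_{R}(X-\bar{t}I_{n}) \xra{\,\sim\,} \cok(P(X))$ that is the identity on representatives coming from $A^{n}$; its inverse is exactly $\psi$, which is therefore an $R$-linear isomorphism.

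If one prefers to avoid this categorical language, the same facts can be extracted directly. Well-definedness of $\psi$ comes from the factorization $P(s)I_{n} - P(X) = (sI_{n}-X)Q(s)$ in $\Mat_{n}(A[s])$, valid because $s-x$ divides $P(s)-P(x)$: setting $s = \bar{t}$ and using $P(\bar{t}) = 0$ in $R$ gives $P(X) \in (X-\bar{t}I_{n})\Mat_{n}(R)$, hence $P(X)A^{n} \subseteq (X-\bar{t}I_{n})R^{n}$. The $R$-linearity and surjectivity of $\psi$ follow from the congruence $\bar{t}v \equiv Xv \pmod{(X-\bar{t}I_{n})R^{n}}$ (valid for $v \in A^{n}$) together with $R^{n} = A^{n} + \bar{t}A^{n} + \cdots + \bar{t}^{d-1}A^{n}$. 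For injectivity, write $P(t) = t^{d} + c_{d-1}t^{d-1} + \cdots + c_{0}$, suppose $v = (X-\bar{t}I_{n})w$ with $w = \sum_{j=0}^{d-1}\bar{t}^{j}w_{j}$ and $w_{j} \in A^{n}$, and compare coefficients in the $A$-basis $1,\bar{t},\dots,\bar{t}^{d-1}$ of $R$, using $\bar{t}^{d} = -(c_{d-1}\bar{t}^{d-1} + \cdots + c_{0})$. This gives the triangular recursion $w_{j-1} = Xw_{j} + c_{j}w_{d-1}$ for $1 \le j \le d-1$, which solves to $w_{0} = (X^{d-1} + c_{d-1}X^{d-2} + \cdots + c_{1})w_{d-1}$ and then $v = Xw_{0} + c_{0}w_{d-1} = P(X)w_{d-1} \in P(X)A^{n}$, so $[v] = 0$ in $\cok(P(X))$.

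In either approach the only step with genuine content is the injectivity of $\psi$; well-definedness, $R$-linearity, and surjectivity are purely formal. So I expect the main obstacle to be the coefficient-comparison recursion above (equivalently, the presentation lemma together with the right-exactness bookkeeping that hides it in the base-change argument).
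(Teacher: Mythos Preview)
Your proposal is correct. Both routes you outline---the base-change argument via the presentation $A[t]^{n}\xra{tI_{n}-X}A[t]^{n}\to M\to 0$ tensored with $R$, and the direct coefficient comparison---are valid, and the computations check out (in particular your recursion $w_{j-1}=Xw_{j}+c_{j}w_{d-1}$ does telescope to $v=P(X)w_{d-1}$).

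As for comparison: the paper does not actually prove Lemma~\ref{Lee}; it attributes the result to Lee and refers the reader to \cite[Lemma 3.2]{CK} for the proof, noting only that it is ``quite simple.'' So there is no in-paper argument to compare against. Your base-change formulation is the conceptually cleanest way to see the isomorphism (it makes both the $R$-linearity and the bijectivity automatic once the presentation of $M$ is established), while your second, hands-on argument is closer in spirit to what one would expect an elementary write-up to contain. Either would be an acceptable stand-in for the omitted proof.
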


\

\hspace{3mm} The following is a well-known formula for counting the number of $n \times n$ matrices with specific rank over $\bF_{q}$. (Theorem \ref{rk} with $r$ and $n-r$ reversed is a special case when $q$ is a prime.)

\begin{lem}\label{rk2} Fix any integers $n \geq 1$ and $0 \leq r \leq n$. The number of rank $r$ matrices in $\M_{n}(\bF_{q})$ is
\[q^{n^{2} - (n-r)^{2}}\frac{\prod_{i=n-r+1}^{n}(1 - q^{-i})^{2}}{\prod_{i=1}^{r}(1 - q^{-i})}.\]
\end{lem}

\

\hspace{3mm} For our proof, we also need an explicit formula for $|\Aut_{R}(G)|$ for the given $R$-module $G$ of finite size. Before we state the formula, note that by extension of scalars under the projection $\bZ_{p}[t]/(P(t)) \tra (\bZ/p^{k}\bZ)[t]/(P(t)) = R$ modulo $p^{k}$, we can view $G$ as a module over $\bZ_{p}[t]/(P(t))$, which is a PID. Thus, using the structure theorem for finitely generated modules over this PID, we can deduce that
\[G \simeq (R/p^{e_{1}}R)^{r_{1}} \times \cdots \times (R/p^{e_{s}}R)^{r_{s}}\]
for some integers $k \geq e_{1} \geq \cdots \geq e_{s} \geq 1$ and $r_{1}, \dots, r_{s} \geq 1$. (The case when $G$ is trivial corresponds to $s = 0$.) The formula for $|\Aut_{R}(G)|$ is given in terms of $s, e_{1}, \dots, e_{s}, r_{1}, \dots, r_{s},$ and $q = |R/pR|$.

\begin{lem}\label{Aut} We have
\[|\Aut_{R}(G)| = \prod_{i=1}^{s}q^{-r_{i}^{2}}|\GL_{r_{i}}(\bF_{q})|\lt(\prod_{1 \leq i, j \leq s}q^{\min(e_{i},e_{j})r_{i}r_{j}}\rt).\]
\end{lem}

\

\begin{rmk} The proof of Lemma \ref{Aut} is well-known. One place that contains a full proof is \cite[p.236]{FW}. (Their statement is only given for prime $q$, but a similar argument admits a proof of Lemma \ref{Aut}.)
\end{rmk}

\

\hspace{3mm} The following Lemma is obtained by modifying the statement and proof of \cite[Lemma 3.1]{CK}. This plays an important role in proving Theorem \ref{main}.

\begin{lem}\label{R} For any $\bar{Z} \in \M_{n}(\bF_{q})$ satisfying $\cok(\bar{Z}) \simeq G/pG$, we have
\[
\#\left\{\begin{array}{c}
Z \in \M_{n}(R) : \\
\cok_{R}(Z) \simeq G \\
\text{and } Z \equiv \bar{Z} \pmod{p}
\end{array}\right\} = q^{(k-1)n^{2}}\dfrac{q^{r_{q}(G)^{2}}\prod_{i=1}^{r_{q}(G)}(1 - q^{-i})^{2}}{|\Aut_{R}(G)|\prod_{i=1}^{u_{q}(G)}(1 - q^{-i})}.
\]
\end{lem}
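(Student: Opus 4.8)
The plan is to carry out the Friedman--Washington counting argument over the finite chain ring $R$. Recall that $R = (\bZ/p^{N+1}\bZ)[t]/(P(t))$ is the quotient of the DVR $\bZ_{p}[t]/(P(t))$ by the $(N+1)$-st power of its maximal ideal, so $R$ is a finite chain ring with maximal ideal $\mf{m} = pR$ and residue field $R/\mf{m} = \bF_{q}$. Since $\cok(\bar{Z}) \simeq G/pG$ and $G/pG$ is an $\bF_{q}$-vector space of dimension $r := r_{q}(G)$, the matrix $\bar{Z}$ has rank $n-r$ over $\bF_{q}$. The strategy has three steps: (i) show that the reduction map $\mr{red}\colon \{Z \in \Mat_{n}(R) : \cok_{R}(Z) \simeq G\} \ra \Mat_{n}(\bF_{q})$ has all of its (necessarily rank-$(n-r)$) fibers of the same cardinality; (ii) compute the total size $T := \#\{Z \in \Mat_{n}(R) : \cok_{R}(Z) \simeq G\}$ and the number $B$ of $n\times n$ matrices of rank $n-r$ over $\bF_{q}$; (iii) conclude that the fiber over $\bar{Z}$ has size $T/B$ and simplify.

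For (i), let $\GL_{n}(R) \times \GL_{n}(R)$ act on $\Mat_{n}(R)$ by $(U,V)\cdot Z = UZV^{-1}$; this preserves the $R$-isomorphism class of $\cok_{R}(Z)$, and it is intertwined with the corresponding action on $\Mat_{n}(\bF_{q})$ under the surjection $\GL_{n}(R) \tra \GL_{n}(\bF_{q})$ (surjective since $R$ is local). Hence $\mr{red}$ is equivariant and carries the fiber over $\bar{Z}$ bijectively onto the fiber over $U\bar{Z}V^{-1}$. Two matrices over $\bF_{q}$ lie in one $\GL_{n}(\bF_{q})\times\GL_{n}(\bF_{q})$-orbit exactly when they have the same rank, and some diagonal matrix over $R$ has cokernel $\simeq G$, so the source is nonempty; thus all fibers over rank-$(n-r)$ matrices are nonempty and equinumerous, of common size $T/B$.

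For (ii), I would compute $T$ by the surjection method: giving $Z$ with $\cok_{R}(Z) \simeq G$ together with an isomorphism $\cok_{R}(Z) \xra{\sim} G$ is the same as giving a pair $(\phi,\psi)$ with $\phi \in \Surj_{R}(R^{n},G)$ and $\psi \in \Surj_{R}(R^{n},\ker\phi)$, so $T\cdot|\Aut_{R}(G)| = \sum_{\phi}\#\Surj_{R}(R^{n},\ker\phi)$. The essential structural input is that over the chain ring $R$ every surjection $R^{n} \tra G$ has kernel isomorphic to a single module $K$ depending only on $G$ and $n$, and, writing $G \simeq \bigoplus_{i=1}^{r}R/\mf{m}^{\lambda_{i}}$ with each $\lambda_{i}\ge 1$, that $K \simeq R^{n-r}\oplus\bigoplus_{i=1}^{r}R/\mf{m}^{N+1-\lambda_{i}}$; here the hypothesis $p^{N}G = 0$, i.e.\ $\lambda_{i}\le N$ for all $i$, forces each $R/\mf{m}^{N+1-\lambda_{i}}$ to be nonzero, so $K$ requires exactly $n$ generators. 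One then needs the count $\#\Surj_{R}(R^{n},M) = (|M|/q^{s})^{n}\prod_{i=0}^{s-1}(q^{n}-q^{i})$ for a finite $R$-module $M$ whose minimal number of generators is $s$: by Nakayama a map $R^{n}\to M$ is onto iff its reduction $\bF_{q}^{n}\to M/\mf{m}M$ is, and $\Hom_{R}(R^{n},M)\tra\Hom_{\bF_{q}}(\bF_{q}^{n},M/\mf{m}M)$ has all fibers of size $(|M|/q^{s})^{n}$. Applying this with $M = G$ ($s = r$) and $M = K$ ($s = n$) gives a closed form for $T$; the field case of the same count gives $B = |\GL_{r}(\bF_{q})|^{-1}\prod_{i=0}^{r-1}(q^{n}-q^{i})\prod_{i=0}^{n-r-1}(q^{n}-q^{i})$.

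Step (iii) is the final bookkeeping: the quantity sought is $T/B$, and after substituting the formulas above the $|G|$-related powers of $q$ cancel, $|\GL_{n}(\bF_{q})|/\prod_{i=0}^{n-r-1}(q^{n}-q^{i})$ recombines with $|\GL_{r}(\bF_{q})|$, and an elementary binomial-coefficient identity collapses the remaining power of $q$ to $Nn^{2}+r^{2}$, yielding $q^{Nn^{2}}q^{r^{2}}\prod_{i=1}^{r}(1-q^{-i})^{2}/|\Aut_{R}(G)|$ as claimed. I expect the genuine content to be concentrated in step (ii): proving that all surjections $R^{n}\tra G$ share a common kernel (a Smith-normal-form statement over the chain ring $R$) and verifying that $p^{N}G = 0$ is exactly what makes the exponents match; steps (i) and (iii) are, respectively, formal and routine.
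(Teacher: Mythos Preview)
The paper does not prove this lemma; it simply cites \cite[Lemma 3.1]{CK}. Your proposal supplies a correct, self-contained proof via the Friedman--Washington counting method adapted to the finite chain ring $R$, which is almost certainly the argument in \cite{CK} as well (that paper is explicitly devoted to generalizing Friedman--Washington's results). All three steps are sound: the equidistribution in (i) is exactly the $\GL_{n}\times\GL_{n}$ transitivity on a rank stratum; in (ii) the kernel computation is the Smith normal form over the DVR $\bZ_{p}[t]/(P(t))$ reduced mod $p^{N+1}$, and you correctly isolate $p^{N}G = 0$ as the hypothesis forcing $K$ to need exactly $n$ generators. One minor remark on (iii): no binomial identity is actually required. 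Writing
\[
\frac{|\GL_{n}(\bF_{q})|}{\prod_{i=0}^{n-r-1}(q^{n}-q^{i})} = \prod_{i=n-r}^{n-1}(q^{n}-q^{i}) = q^{nr}\prod_{j=1}^{r}(1-q^{-j})
\quad\text{and}\quad
|\GL_{r}(\bF_{q})| = q^{r^{2}}\prod_{j=1}^{r}(1-q^{-j}),
\]
and using $|G|\cdot|K| = q^{n(N+1)}$, the quotient $T/B$ simplifies directly to $q^{Nn^{2}+r^{2}}\prod_{j=1}^{r}(1-q^{-j})^{2}/|\Aut_{R}(G)|$.
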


\begin{proof} We follow the proof of \cite[Lemma 3.1]{CK}, which only covers the case $p^{k-1}G = 0$, but also add some subtle modifications to cover the case $p^{k-1}G \neq 0$. We write $r = r_{q}(G) = \dim_{\bF_{q}}(G/pG)$ for ease of notation. Having $\cok(\bar{Z}) \simeq G/pG$ is equivalent to $\rk(\bar{Z}) = n - r$ over $\bF_{q}$. Hence, we have $\bar{Q}_{1}, \bar{Q}_{2} \in \GL_{n}(\bF_{q})$ such that
\[\bar{Q}_{1}\bar{Z}\bar{Q}_{2} = \diag(0, 0, \dots, 0, 1, 1, \dots, 1),\]
the $n \times n$ diagonal matrix whose first $r$ entries are $0$ and the remaining $n-r$ entries are $1$. We fix lifts $Q_{1}, Q_{2} \in \M_{n}(R)$ of $\bar{Q}_{1}, \bar{Q}_{2} \in \M_{n}(\bF_{q}) = \M_{n}(R/pR)$, respectively, meaning $Q_{i} \equiv \bar{Q}_{i} \pmod{p}$. Let $Z \in \M_{n}(R)$ be any lift of $\bar{Z} \in \M_{n}(\bF_{q})$. Since $Q_{1}ZQ_{2}$ is a lift of $\bar{Q}_{1}\bar{Z}\bar{Q}_{2}$ that corresponds to $Z$ with $\cok_{R}(Q_{1}ZQ) \simeq \cok_{R}(Z)$, this process lets us reduce the problem to the following special case:
\[\bar{Z} = \diag(0, 0, \dots, 0, 1, 1, \dots, 1).\]
Hence, we assume this from now on.

\

\hspace{3mm} Noting that $R/pR = \bF_{q}$, we have
\[Z = \begin{bmatrix}
pA & pB \\
pC & I_{r} + pD
\end{bmatrix},\]
where
\bi
	\item $A \in \M_{n-r}(R)$ and $D \in \M_{r}(R)$;
	\item $B$ is an $r \times (n-r)$ matrix whose entries are in $R$;
	\item $C$ is an $(n-r) \times r$ matrix whose entries are in $R$.
\ei
Note that $Z$ only depend on $pA, pB, pC, pD$ not necessarily $A, B, C, D$. By elementary row and column operations, we can deduce that 
\[QZQ' = \begin{bmatrix}
pA - p^{2}B(I_{r} + pD)^{-1}C & 0 \\
0 & I_{r} + pD
\end{bmatrix}\]
for some $Q, Q' \in \GL_{n}(R)$. This implies that
\[\cok_{R}(Z) \simeq \cok_{R}(QZQ') \simeq \cok_{R}(pA - p^{2}B(I_{r} + pD)^{-1}C).\]
Note that for any fixed $pB, pC, pD$, the map $pA \mapsto pA - p^{2}B(I_{r} + pD)^{-1}C$ is a bijection from $p\M_{r}(R)$ to itself. There are $q^{(k-1)(n^{2} - r^{2})}$ ways to choose $(pB, pC, pD)$, so this implies that
\[\#\left\{\begin{array}{c}
Z \in \M_{n}(R) : \\
\cok_{R}(Z) \simeq G \\
\text{and } Z \equiv \bar{Z} \pmod{p}
\end{array}\right\} = q^{(k-1)(n^{2} - r^{2})}\#\{pA \in p\M_{r}(R) : \cok_{R}(pA) \simeq G\}.\]
It remains to show that
\begin{equation}\label{show}
\#\{pA \in p\M_{r}(R) : \cok_{R}(pA) \simeq G\} = \dfrac{q^{kr^{2}}\prod_{i=1}^{r}(1 - q^{-i})^{2}}{|\Aut_{R}(G)|\prod_{i=1}^{u_{q}(G)}(1 - q^{-i})}.
\end{equation}
To do so, we fix an arbitrary $pA \in p\M_{r}(R)$, which we can write as
\[pA = pA_{1} + p^{2}A_{2} + \cdots + p^{k-1}A_{k-1},\]
where each $A_{i}$ is an $r \times r$ matrix with entries in the set $\{a_{0} + a_{1}\bar{t} + \cdots + a_{d-1}\bar{t}^{d-1} : a_{j} \in \{0, 1, \dots, p-1\}\}$ of $q = p^{d}$ elements, which we may identify as $\bF_{q}$. (Hence, we may consider $A_{1}, A_{2}, \dots, A_{k-1}$ as matrices over $\bF_{q}$; in particular, we may speak about their ranks.) We then carefully consider what constrains ought to be imposed on each $p^{i}A_{i}$ when we require $\cok_{R}(pA) \simeq G$.

\

\hspace{3mm} To do this, we recall that we have a concrete description of $G$ (from the paragraph before Lemma \ref{Aut}) as follows:
\begin{align*}
G &\simeq (R/p^{e_{1}}R)^{r_{1}} \times \cdots \times (R/p^{e_{s}}R)^{r_{s}} \\
&\simeq \lt(\frac{(\bZ/p^{e_{1}}\bZ)[t]}{(P(t))}\rt)^{r_{1}} \times \cdots \times \lt(\frac{(\bZ/p^{e_{s}}\bZ)[t]}{(P(t))}\rt)^{r_{s}}
\end{align*}
as $R$-modules, for some integers $k \geq e_{1} > \cdots > e_{s} \geq 1$ and $r_{1}, \dots, r_{s} \geq 1$. Note that 
\bi
	\item $r_{1} + \cdots + r_{s} = r$; 
	\item the case $r=0$ corresponds to $s=0$;
	\item $p^{k-1}G = 0$ if and only if $e_{1} < k$ (which also implies $u_{q}(G) = 0$);
	\item $p^{k-1}G \neq 0$ and ony if $e_{1} = k$ (which also implies $u_{q}(G) = r_{1}$).
\ei
When $r = 0$ (so that $s = 0$), the result is trivial. Thus, assume that $r \geq 1$. We now count the number possibilities for $A_{1}, \dots, A_{k-1}$ (as matrices in $\M_{r}(\bF_{q})$) so that $\cok(pA) \simeq G$. We do this in several steps as follows.

\

\textbf{Step 1}. To satisfy the isomprhism $\cok(pA) \simeq G$, we must have $A_{1} = A_{2} = \cdots = A_{e_{s}-1} = 0$ because otherwise $\cok(pA)$ can have a copy of $R/p^{e}R$ for some $1 \leq e < e_{s}$ in its product (or direct sum) decomposition, while $G$ does not have the same. (If $e_{1} = 1$, this sentence gives a vacuous claim.) If $e_{s} = e_{1} = k$, then we have already counted all the possibilities for $A_{1}, A_{2}, \dots, A_{k-1}$ (i.e., they are all zero matrices). In this case, we also have $G \simeq (R/p^{k}R)^{r}$ with $r = r_{1} = r_{s}$, so $|\Aut_{R}(G)| = q^{kr^{2}}\prod_{i=1}^{r}(1 - q^{-i})$ and $r = r_{q}(G) = u_{q}(G)$. This implies that both sides of (\ref{show}) are $1$, so we would be done with the proof. Thus, assume that $e_{s} \neq e_{1}$ or $e_{1} < k$. Then the requirement that the copy of $R/p^{e_{s}}R$ must appear $r_{s}$ times can be equivalently stated as $\rk(A_{e_{s}}) = r_{s}$ (as a matrix over $\bF_{q}$). Suppose that we chose such $A_{e_{s}}$. So far, we have counted all the valid choices for $A_{1}, \dots, A_{e_{s}}$.

\

\textbf{Step 2}. Multiplying matrices in $\GL_{n}(\bF_{q})$ left and right to $pA$, we may assume that
\[A_{e_{s}} = \begin{bmatrix}
0 & 0 \\
0 & I_{r_{s}}
\end{bmatrix},\]
where the top left zero means the $(r - r_{s}) \times (r - r_{s})$ zero matrix. Then we write
\[p^{e_{s}+1}A_{e_{s}+1} + p^{e_{s}+2}A_{e_{s}+2} + \cdots + p^{k-1}A_{k-1} = \begin{bmatrix}
p^{e_{s}+1}B_{e_{s}+1} + p^{e_{s}+2}B_{e_{s}+2} + \cdots + p^{k-1}B_{k-1} & p^{e_{s}+1}C \\
p^{e_{s}+1}D & p^{e_{s}+1}E
\end{bmatrix},\]
where each $B_{i}$ is an $(r-r_{s}) \times (r-r_{s})$ matrix over $\bF_{q}$. Note that to guarantee $\cok(pA) \simeq G$, we must satisfy $B_{e_{s}+1} = B_{e_{s}+2} = \cdots = B_{e_{s-1}-1} = 0$ to ensure that the product decomposition of $\cok(pA)$ does not have any copies of $R/p^{e}R$ with $e_{s}< e < e_{s-1}$. We can freely choose $(p^{e_{s}+1}C, p^{e_{s}+1}D, p^{e_{s}+1}E)$, where $C, D, E$ are matrices of appropriate sizes over $\bF_{q}$. The number of choices for $(p^{e_{s}+1}C, p^{e_{s}+1}D, p^{e_{s}+1}E)$ is 
\[q^{(k-1 - e_{s})(r^{2} - (r - r_{s})^{2})} = q^{(k-1-e_{s})(r_{s}^{2} + 2r_{s}\sum_{1 \leq i < s}r_{i})}.\]
If $e_{s-1} = e_{1} = k$, then we have counted all the possibilities for $A_{1}, A_{2}, \dots, A_{k-1}$, and we may jump to Case 2 of Step 5. Otherwise, we have $e_{s-1} < e_{1}$ or $e_{1} < k$. Then we must have $\rk(B_{e_{s-1}}) = r_{s-1}$ to ensure that the copy of $R/p^{e_{s-1}}R$ appears $r_{s-1}$ times in the product decomposition of $\cok(pA)$. So far, we have counted valid choices for $A_{1}, \dots, A_{e_{s}}, \dots, A_{e_{s-1}}$ and the entries of $A_{e_{s-1}+1}, A_{e_{s-1}+2}, \dots, A_{k-1}$ except the top left $(r - r_{s}) \times (r - r_{s})$ submatrices. (This amounts to not counting $B_{e_{s-1}+1}, \dots, B_{k-1}$ yet, which is to be done in later steps.)

\ 

\textbf{Step 3}. Multiplying matrices in $\GL_{n}(\bF_{q})$ left and right to $pA$, we may assume that
\[A_{e_{s-1}} = \begin{bmatrix}
0 & 0 & \ast \\
0 & I_{r_{s-1}} & \ast \\
\ast & \ast & \ast
\end{bmatrix},\]
where the top left zero means the $(r - r_{s} - r_{s-1}) \times (r - r_{s} - r_{s-1})$ zero matrix. This may change $A_{e_{s}}, A_{e_{s}+1}, \dots, A_{e_{s-1}+1}$, but this does not affect our counting, and the submatrices with the mark $\ast$ are the ones that are already counted in the previous step. Then we may write
\[p^{e_{s-1}+1}A_{e_{s-1}+1} + p^{e_{s-1}+2}A_{e_{s-1}+2} + \cdots + p^{k-1}A_{k-1} = \begin{bmatrix}
p^{e_{s-1}+1}B'_{e_{s-1}+1} + p^{e_{s-1}+2}B'_{e_{s-1}+2} + \cdots + p^{k-1}B'_{k-1} & p^{e_{s-1}+1}C' & \ast \\
p^{e_{s-1}+1}D' & p^{e_{s-1}+1}E' & \ast \\
\ast & \ast & \ast
\end{bmatrix},\]
where each $B'_{i}$ is an $(r - r_{s} - r_{s-1}) \times (r - r_{s} - r_{s-1})$ matrix and $E'$ is an $r_{s-1} \times r_{s-1}$ matrix over $\bF_{q}$. Note that to guarantee $\cok(pA) \simeq G$, we must satisfy $B'_{e_{s-1}+1} = B'_{e_{s-1}+2} = \cdots = B'_{e_{s-2}-1} = 0$ to ensure that the product decomposition of $\cok(pA)$ does not have any copies of the form $R/p^{e}R$ with $e_{s-1}< e < e_{s-2}$. The number of free choices for $(p^{e_{s-1}+1}C', p^{e_{s-1}+1}D', p^{e_{s-1}+1}E')$ is 
\begin{align*}
q^{(k-1 - e_{s-1})((r - r_{s})^{2} - (r - r_{s} - r_{s-1})^{2})} &=q^{(k-1 - e_{s-1}) (r_{s-1}(2(r-r_{s}) - r_{s-1}))} \\
&= q^{(k-1 - e_{s-1})(r_{s-1}(r_{s-1} + 2(r_{1} + \cdots + r_{s-2})))} \\
&= q^{(k-1 - e_{s-1})(r_{s-1}^{2} + 2r_{s-1}\sum_{1 \leq i < s-1}r_{i})}.
\end{align*}
If $e_{s-2} = e_{1} = k$, then we have counted all the possibilities for $A_{1}, A_{2}, \dots, A_{k-1}$, and we may jump to Case 2 of Step 5. Otherwise, we have $e_{s-2} < e_{1}$ or $e_{1} < k$. Then we must have $\rk(B'_{e_{s-2}}) = r_{s-2}$ to ensure that the copy of $R/p^{e_{s-2}}R$ appears $r_{s-2}$ times in the product decomposition of $\cok(pA)$. So far, we have counted valid choices for $A_{1}, \dots, A_{e_{s}}, \dots, A_{e_{s-1}}, \dots, A_{e_{s-2}}$ and the entries of $A_{e_{s-2}+1}, A_{e_{s-2}+2}, \dots, A_{k-1}$ except the top left $(r - r_{s} - r_{s-1}) \times (r - r_{s} - r_{s-1})$ submatrices. (This amounts to not counting $B'_{e_{s-2}+1}, \dots, B'_{k-1}$ yet, which is to be done in the next step.)

\

\textbf{Step 5}. Continuing this way, we can count all the valid choices for $A_{1}, \dots, A_{e_{s}}, \dots, A_{e_{s-1}}, \dots, A_{e_{2}}$ and the entries of $A_{e_{1}}, A_{e_{1}+1}, \dots, A_{k-1}$ except the top left $(r_{1}+r_{2}) \times (r_{1}+r_{2})$ submatrices. We may multiply matrices in $\GL_{n}(\bF_{q})$ left and right of $pA$ so that we may assume
\[A_{e_{2}} = \begin{bmatrix}
0 & 0 & \ast \\
0 & I_{r_{2}} & \ast \\
\ast & \ast & \ast
\end{bmatrix},\]
where the top left zero means the $r_{1} \times r_{1}$ zero matrix. (This may change $A_{e_{s}}, A_{e_{s}+1}, \dots, A_{e_{2}+1}$, but this does not affect our counting.) Then we may write
\[p^{e_{2}+1}A_{e_{2}+1} + p^{e_{2}+2}A_{e_{2}+2} + \cdots + p^{k-1}A_{k-1} = \begin{bmatrix}
p^{e_{2}+1}B''_{e_{2}+1} + p^{e_{2}+2}B''_{e_{2}+2} + \cdots + p^{k-1}B''_{k-1} & p^{e_{2}+1}C'' & \ast \\
p^{e_{2}+1}D'' & p^{e_{2}+1}E'' & \ast \\
\ast & \ast & \ast
\end{bmatrix},\]
where the submatrices with the mark $\ast$ are the ones that are already counted in the previous step.

\

\hspace{3mm} There are two cases from here:

\

\textbf{Case 1}. The first case is when $e_{1} < k$ (i.e., $p^{k-1}G = 0$). Note that to guarantee $\cok(pA) \simeq G$, we must satisfy the following criteria:

\bi
	\item We must have $B''_{e_{2}+1} = B''_{e_{2}+2} = \cdots = B''_{e_{1}-1} = 0$ to ensure that the product decomposition of $\cok(pA)$ does not have any copies of the form $R/p^{e}R$ with $e_{2}< e < e_{1}$.
	\item We must have $\rk(B''_{e_{1}}) = r_{1}$ to ensure that the copy of $R/p^{e_{1}}R$ appears $r_{1}$ times in the product decomposition of $\cok(pA)$.
\ei
So far, we have counted valid choices for $A_{1}, \dots, A_{e_{s}}, \dots, A_{e_{2}}, \dots, A_{e_{1}}$ and the entries of $A_{e_{1}+1}, A_{e_{1}+2}, \dots, A_{k-1}$ except the top left $r_{1} \times r_{1}$ submatrices. Now, we note that all the possible remaining entries of $A_{e_{1}+1}, A_{e_{1}+2}, \dots, A_{k-1}$ are valid, and there are $q^{(k-1-e_{1})r_{1}^{2}}$ such choices.

\

\hspace{3mm} Following all the steps above, we have
\begin{align*}
\#\{pA \in p\M_{r}(R) : \cok_{R}(pA) \simeq G\} &= \lt(\prod_{i=0}^{s-1}\#\left\{\begin{array}{c}
X \in \M_{r-(r_{s} + r_{s-1} + \cdots + r_{s-i+1})}(\bF_{q}) : \\
\rk(X) = r_{s-i}
\end{array}\right\}\rt) q^{\sum_{i=1}^{s}(k-1-e_{i})(r_{i}^{2} + 2r_{i}\sum_{1 \leq j < i}r_{j})} \\
&= \lt(\prod_{i=0}^{s-1}\#\left\{\begin{array}{c}
X \in \M_{r-(r_{s} + r_{s-1} + \cdots + r_{s-i+1})}(\bF_{q}) : \\
\rk(X) = r_{s-i}
\end{array}\right\}\rt) q^{(k-1)r^{2} - \sum_{i=1}^{s}e_{i}r_{i}^{2}  + 2\sum_{1 \leq j < i \leq s}e_{i}r_{i}r_{j}} \\
&= \lt(\prod_{i=0}^{s-1}\#\left\{\begin{array}{c}
X \in \M_{r-(r_{s} + r_{s-1} + \cdots + r_{s-i+1})}(\bF_{q}) : \\
\rk(X) = r_{s-i}
\end{array}\right\}\rt) \frac{q^{(k-1)r^{2}}}{\prod_{1 \leq i, j \leq s}q^{\min(e_{i},e_{j})r_{i}r_{j}}}.
\end{align*}

\

Lemma \ref{rk2} implies that (noting that $r - (r_{s} + r_{s-1} + \cdots + r_{s-i+1}) = r_{1} + \cdots + r_{s-i}$)
\begin{align*}
\prod_{i=0}^{s-1}\#\left\{\begin{array}{c}
X \in \M_{r-(r_{s} + r_{s-1} + \cdots + r_{s-i+1})}(\bF_{q}) : \\
\rk(X) = r_{s-i}
\end{array}\right\} &= \prod_{i=0}^{s-1} q^{(r_{1} + \cdots + r_{s-i-1} + r_{s-i})^{2} - (r_{1} + \cdots + r_{s-i-1})^{2}}\frac{\prod_{i=r_{1} + \cdots + r_{s-i-1}+1}^{r_{1} + \cdots + r_{s-i-1} + r_{s-i}}(1 - q^{-i})^{2}}{\prod_{i=1}^{r_{s-i}}(1 - q^{-i})}\\
&= q^{r^{2}}\frac{\prod_{i=1}^{r}(1 - q^{-i})^{2}}{(\prod_{i=1}^{r_{1}}(1 - q^{-i})) \cdots (\prod_{i=1}^{r_{s}}(1 - q^{-i}))} \\
&= q^{r^{2}}\frac{\prod_{i=1}^{r}(1 - q^{-i})^{2}}{\prod_{i=1}^{s}q^{-r_{i}^{2}}|\GL_{r_{i}}(\bF_{q})|},
\end{align*}
where $r_{0} := 0$. Hence, continuing the previous computations, we have
\begin{align*}
\#\{pA \in p\M_{r}(R) : \cok_{R}(pA) \simeq G\} &= \lt(\prod_{i=0}^{s-1}\#\left\{\begin{array}{c}
X \in \M_{r-(r_{s} + r_{s-1} + \cdots + r_{s-i+1})}(\bF_{q}) : \\
\rk(X) = r_{s-i}
\end{array}\right\}\rt) \frac{q^{(k-1)r^{2}}}{\prod_{1 \leq i, j \leq s}q^{\min(e_{i},e_{j})r_{i}r_{j}}} \\
&= q^{kr^{2}}\frac{\prod_{i=1}^{r}(1 - q^{-i})^{2}}{\prod_{i=1}^{s}q^{-r_{i}^{2}}|\GL_{r_{i}}(\bF_{q})|(\prod_{1 \leq i, j \leq s}q^{\min(e_{i},e_{j})r_{i}r_{j}})} \\
&= q^{kr^{2}}\frac{\prod_{i=1}^{r}(1 - q^{-i})^{2}}{|\Aut_{R}(G)|},
\end{align*}
where we used Lemma \ref{Aut}. This finishes the proof for the first case where $e_{1} < k$ (i.e., $p^{k-1}G = 0$) since we have $u_{q}(G) = 0$.

\

\textbf{Case 2}. The second case is when $e_{1} = k$ (i.e., $p^{k-1}G \neq 0$). Note that to guarantee $\cok(pA) \simeq G$, we must satisfy the following criteria:

\bi
	\item We must have $B''_{e_{2}+1} = B''_{e_{2}+2} = \cdots = B''_{k-1} = 0$ to ensure that the product decomposition of $\cok(pA)$ does not have any copies of the form $R/p^{e}R$ with $e_{2}< e < e_{1} = k$. 
	\item Unlike Case 1, there is no requirement that $\rk(B''_{e_{1}}) = r_{1}$ because $e_{1} = k$ (and there is no $B''_{k}$).
\ei 
Up to here, we counted all the possibilities for $A_{1}, \dots, A_{k-1}$. Note that $q^{(k-1 - e_{1})r_{1}^{2}}$ is not multiplied unlike in Case 1. Following similar steps to Case 1, we have

\begin{align*}
\#\{pA \in p\M_{r}(R) : \cok_{R}(pA) \simeq G\} &= \frac{\lt(\prod_{i=0}^{s-1}\#\left\{\begin{array}{c}
X \in \M_{r-(r_{s} + r_{s-1} + \cdots + r_{s-i+1})}(\bF_{q}) : \\
\rk(X) = r_{s-i}
\end{array}\right\}\rt)}{q^{(k-1 - e_{1})r_{1}^{2}} \#\left\{\begin{array}{c}
X \in \M_{r_{1}}(\bF_{q}) : \\
\rk(X) = r_{1}
\end{array}\right\} } \frac{q^{(k-1)r^{2}}}{\prod_{1 \leq i, j \leq s}q^{\min(e_{i},e_{j})r_{i}r_{j}}} \\
&= \frac{1}{q^{(k-1 - e_{1})r_{1}^{2}} \#\left\{\begin{array}{c}
X \in \M_{r_{1}}(\bF_{q}) : \\
\rk(X) = r_{1}
\end{array}\right\} }  \frac{q^{kr^{2}}\prod_{i=1}^{r}(1 - q^{-i})^{2}}{|\Aut_{R}(G)|} \\
&= \frac{q^{kr^{2}}\prod_{i=1}^{r}(1 - q^{-i})^{2}}{|\Aut_{R}(G)|\prod_{i=1}^{r_{1}}(1 - q^{-i})}
\end{align*}
because $e_{1} = k$ and
\[\#\left\{\begin{array}{c}
X \in \M_{r_{1}}(\bF_{q}) : \\
\rk(X) = r_{1}
\end{array}\right\} = |\GL_{r_{1}}(\bF_{q})| = q^{r_{1}^{2}} \prod_{i=1}^{r_{1}}(1 - q^{-i}).\]
This finishes the proof.
\end{proof}

\

\begin{rmk} We note that when $p^{k-1}G = 0$ (i.e., $u_{q}(G) = 0$), the conclusion of Lemma \ref{R} can be deduced from the arguments by Friedman and Washington in \cite{FW}. However, when $p^{k-1}G \neq 0$ (i.e., $u_{q}(G) > 0$), Lemma \ref{R} is a new result.
\end{rmk}

\


\subsection{Proof of Theorem \ref{main}} To show Theorem \ref{main}, it is enough to show the following lemma:

\begin{lem}\label{final} Given any $pM_{1}, \dots, pM_{d-1} \in p\M_{n}(\bZ/p^{k}\bZ)$, we have

\[\#\left\{\begin{array}{c}
Y \in \M_{n}(\bZ/p^{k}\bZ) : \\
\cok_{R}(Y + \bar{t}(-I_{n} + pM_{1}) + \bar{t}^{2}pM_{2} + \cdots + \bar{t}^{d-1}pM_{d-1}) \simeq G, \\
Y \equiv \bar{X} \pmod{p}
\end{array}\right\} = \#\left\{\begin{array}{c}
X \in \M_{n}(\bZ/p^{k}\bZ) : \\
\cok_{R}(X - \bar{t}I_{n}) \simeq G, \\
X \equiv \bar{X} \pmod{p}
\end{array}\right\}.\]
\end{lem}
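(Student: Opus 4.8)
The plan is to exhibit a measure-preserving (here, cardinality-preserving) bijection between the two sets by absorbing the perturbation $\bar t(pM_1) + \bar t^2 pM_2 + \cdots + \bar t^{d-1}pM_{d-1}$ into a change of variables on $Y$ and on the $R$-module structure, showing that the ``extra'' $p$-divisible terms do not change the count. The key observation is that the matrix
\[Z := Y + \bar t(-I_n + pM_1) + \bar t^2 pM_2 + \cdots + \bar t^{d-1} pM_{d-1} \in \Mat_n(R)\]
reduces mod $p$ to $\bar X - \bar t I_n \in \Mat_n(\bF_q)$, exactly as $X - \bar t I_n$ does; so both sides are counting matrices in $\Mat_n(R)$ (built from $\Mat_n(\bZ/p^{N+1}\bZ)$ in the prescribed affine way) with a fixed residue $\bar X - \bar t I_n$ mod $p$ and with $R$-cokernel isomorphic to $G$. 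I would first fix notation: write $A := -I_n + pM_1$, which is a unit in $\Mat_n(R)$ since its reduction $-I_n$ is invertible over $\bF_q$, hence over $R$ (as $R$ is local with maximal ideal $pR$). The right-hand side is the $M_i = 0$ case of the left-hand side, so it suffices to show the left-hand count is independent of the choice of $pM_1, \dots, pM_{d-1}$; equivalently, to build a bijection from the $(M_1,\dots,M_{d-1})$-set to the $(0,\dots,0)$-set.

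The main step is to understand how multiplying $Z$ on the left by a unit of $\Mat_n(R)$ and substituting in the $Y$-variable interact. Since $\cok_R(Z) \simeq \cok_R(UZ)$ for any $U \in \GL_n(R)$, I would try to choose $U \in \GL_n(R)$, depending on $M_1,\dots,M_{d-1}$ but reducing to $I_n$ mod $p$ (so that residues are preserved), with the property that $U\bigl(Y + \bar t A + \bar t^2 pM_2 + \cdots\bigr) = Y' - \bar t I_n$ for a matrix $Y' \in \Mat_n(\bZ/p^{N+1}\bZ)$ that depends on $Y$ through an affine bijection of $\Mat_n(\bZ/p^{N+1}\bZ)$. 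Expanding $U = I_n + pU_1 + \cdots$ in powers of $\bar t$ (using the $\bZ/p^{N+1}\bZ$-basis $1, \bar t, \dots, \bar t^{d-1}$ of $R$, and reducing higher powers of $\bar t$ via $P$), the coefficient of each $\bar t^j$ gives an equation; the $\bar t^1$-coefficient forces a relation that lets us kill $pM_1$, the $\bar t^2$-coefficient kills $pM_2$, and so on, solving triangularly for the components of $U$. Crucially all the correction terms carry a factor of $p$, and because $p^{N}G = 0$ one expects the bookkeeping in $\bZ/p^{N+1}\bZ$ to close up; one must also track how $\bar t^{d}, \dots, \bar t^{2d-2}$ get rewritten via $P(\bar t) = 0$, which is where $\deg P = d \geq 3$ makes things genuinely more involved than the $d \leq 2$ case of \cite{CK}.

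I expect the main obstacle to be exactly this: producing the unit $U$ (or an equivalent direct bijection) explicitly enough to verify that the induced map on the $Y$-coordinate is a bijection of $\Mat_n(\bZ/p^{N+1}\bZ)$ preserving the residue $\bar X$ mod $p$, while correctly handling the reduction of high powers of $\bar t$ modulo $P$. An alternative, possibly cleaner, route — which I would pursue in parallel — is an inductive/deformation argument: show that changing a single $pM_i$ by $p^{N}\cdot(\text{anything})$, i.e. the ``top layer'', does not change the count (a direct linear-algebra argument over $\bF_q$ after peeling off $p^{N}$), and then induct downward on the $p$-adic valuation of the perturbation and on the index $i$, using at each stage a substitution $Y \mapsto Y + (\text{multiple of }p)$ to cancel the lowest-order term of the perturbation. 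Either way, once the count is shown to be independent of $pM_1, \dots, pM_{d-1}$, setting them to zero gives the right-hand side and the lemma follows; combined with Lemma \ref{Lee} and Lemma \ref{R}, this yields \eqref{goal} and hence Theorem \ref{main}.
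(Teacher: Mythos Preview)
Your second route---peel off the top $p^N$-layer of the perturbation and induct---is the approach the paper takes, but two ingredients you do not name turn out to be essential. First, the induction on $N$ does not close under the hypothesis $p^N G = 0$: passing from level $N{+}1$ to level $N$ replaces $G$ by $G/p^N G$, and the hypothesis inherited is only $p^N(G/p^N G)=0$, not $p^{N-1}(G/p^N G)=0$. The paper therefore proves a strengthened statement (Lemma~\ref{final'}) under the weaker assumption $p^{N+1}H=0$, which \emph{is} stable under this reduction. Second, your ``direct linear-algebra argument over $\bF_q$'' for the top layer is only half of what is needed. The paper's explicit top-layer bijection (Lemma~\ref{final1}) is right-multiplication of $Y$ by factors $(I_n+p^N Y^{j}M'_{j+1})$, which works because $p^N Y^j$ depends only on $\bar X$; but this only compares the two counts over a \emph{fixed} residue $X'\bmod p^N$. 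To assemble these into a comparison over a fixed residue $\bar X\bmod p$ one still must know that for any $A,B\in\Mat_n(R/p^NR)$ with $\cok_{R/p^NR}(A)\simeq\cok_{R/p^NR}(B)\simeq H/p^N H$, the number of lifts to $\Mat_n(R)$ with $R$-cokernel $\simeq H$ is the same. The paper proves this separately (Lemma~\ref{final3}) via Smith normal form over $R$, and it is this lemma---not the top-layer bijection alone---that lets the induction hypothesis be applied to the \emph{perturbed} count at level $N$.

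Your first route, a single unit $U\in\GL_n(R)$ independent of $Y$ with $UZ=Y'-\bar t I_n$, has a genuine gap. Equating $\bar t^{1}$-coefficients forces a relation of the shape $U_1 Y \equiv -pM_1 + (\cdots)$, so either $U$ depends on $Y$ or one must invert $Y$, which need not be a unit since $\bar X$ is arbitrary. Once $U$ depends on $Y$, the map $Y\mapsto Y'$ is no longer affine, and for $N\ge 2$ the dependence is through $Y\bmod p^N$ rather than merely $Y\bmod p$, so bijectivity on each $\bar X$-fiber is lost. The paper's explicit bijections only ever handle perturbations by $p^N$ (precisely so that the $Y$-dependence collapses to $\bar X$), which is why the Smith-normal-form step and the induction are unavoidable for general $N$.
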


\

\begin{proof}[Proof that Lemma \ref{final} implies Theorem \ref{main}] Once we have Lemma \ref{final}, we can use Lemma \ref{R} so that

\begin{align*}
q^{(k-1)n^{2}}&\frac{q^{r_{q}(G)^{2}}\prod_{i=1}^{r_{q}(G)}(1 - q^{-i})^{2}}{|\Aut_{R}(G)|\prod_{i=1}^{u_{q}(G)}(1 - q^{-i})} \\
&= \#\left\{\begin{array}{c}
Z \in \M_{n}(R) : \\
\cok_{R}(Z) \simeq G \\
\text{and } Z \equiv \bar{X} - \bar{t}I_{n} \pmod{p}
\end{array}\right\}  \\
&= \sum_{pM_{1}, pM_{2}, \dots, pM_{d-1} \in p\M_{n}(\bZ/p^{k}\bZ)} \#\left\{\begin{array}{c}
Y \in \M_{n}(\bZ/p^{k}\bZ) : \\
\cok_{R}(Y + \bar{t}(-I_{n} + pM_{1}) + \bar{t}^{2}pM_{2} + \cdots + \bar{t}^{d-1}pM_{d-1}) \simeq G, \\
Y \equiv \bar{X} \pmod{p}
\end{array}\right\} \\
&= \sum_{pM_{1}, pM_{2}, \dots, pM_{d-1} \in p\M_{n}(\bZ/p^{k}\bZ)} \#\left\{\begin{array}{c}
X \in \M_{n}(\bZ/p^{k}\bZ) : \\
\cok_{R}(X - \bar{t}I_{n}) \simeq G, \\
X \equiv \bar{X} \pmod{p}
\end{array}\right\} \\
&= p^{(d-1)(k-1)n^{2}} \#\left\{\begin{array}{c}
X \in \M_{n}(\bZ/p^{k}\bZ) : \\
\cok_{R}(X - \bar{t}I_{n}) \simeq G, \\
X \equiv \bar{X} \pmod{p}
\end{array}\right\} \\
&= p^{(d-1)(k-1)n^{2}} \#\left\{\begin{array}{c}
X \in \M_{n}(\bZ/p^{k}\bZ) : \\
\cok(P(X)) \simeq G, \\
X \equiv \bar{X} \pmod{p}
\end{array}\right\},
\end{align*}

\

where we also used Lemma \ref{Lee}. Since $q = p^{d}$, this implies (\ref{goal}), which proves Theorem \ref{main}. 
\end{proof}

\

\begin{rmk}\label{special} Note that when $d = 1$, we have $R = \bZ/p^{k}\bZ$ and Lemma \ref{final} easily follows for this case. When $d = 2$, we have a bijection
\[\left\{\begin{array}{c}
Y \in \M_{n}(\bZ/p^{k}\bZ) : \\
\cok_{R}(Y + \bar{t}(-I_{n} + pM_{1})) \simeq G, \\
Y \equiv \bar{X} \pmod{p}
\end{array}\right\} \ra \left\{\begin{array}{c}
X \in \M_{n}(\bZ/p^{k}\bZ) : \\
\cok_{R}(X - \bar{t}I_{n}) \simeq G, \\
X \equiv \bar{X} \pmod{p}
\end{array}\right\}\]

given by $Y \mapsto X = Y(I_{n} - pM_{1})^{-1} = Y(I_{n} + pM_{1} + \cdots + p^{k-1}M_{1}^{k-1})$, which appeared in \cite[Section 3.2]{CK}. However, starting from $d = 3$, it is more complicated to show Lemma \ref{final}, and we resolve this complication for the rest of this paper. Before we get into the general proof, we consider the case $d = 3$ with $k = 2$ to explain some part of our argument more clearly. In this case, our goal is to construct a bijection
\[\left\{\begin{array}{c}
Y \in \M_{n}(\bZ/p^{2}\bZ) : \\
\cok_{R}(Y + \bar{t}(-I_{n} + pM_{1}) + \bar{t}^{2}pM_{2}) \simeq G, \\
Y \equiv \bar{X} \pmod{p}
\end{array}\right\} \ra \left\{\begin{array}{c}
X \in \M_{n}(\bZ/p^{2}\bZ) : \\
\cok_{R}(X - \bar{t}I_{n}) \simeq G, \\
X \equiv \bar{X} \pmod{p}
\end{array}\right\}.\]

Consider the map $Y \mapsto X = Y(I_{n} + pM_{1})(I_{n} + pYM_{2})$. This is a bijection because we are fixing $M_{1}$ and $M_{2}$, whose entries consist of elements of $\{0, 1, \dots, p-1\}$ modulo $p^{2}$, and $pY$ only depends on $\bar{X}$. We need to show that
\[\cok_{R}(X - \bar{t}I_{n}) \simeq G\]

when $X = Y(I_{n} + pM_{1})(I_{n} + pYM_{2}) - \bar{t}I_{n}$. Since $p^{2} = 0$ in $\bZ/p^{2}\bZ$, we have
\begin{align*}
(X- \bar{t}I_{n})(I_{n} - pYM_{2})(I_{n} - pM_{1}) &= (Y(I_{n} + pM_{1})(I_{n} + pYM_{2}) - \bar{t}I_{n})(I_{n} - pYM_{2})(I_{n} - pM_{1}) \\
&= Y - \bar{t}(I_{n} - pYM_{2})(I_{n} - pM_{1}) \\
&= Y + \bar{t}(-I_{n} + pM_{1} + pYM_{2}) \\
&= Y(I_{n} + \bar{t}pM_{2}) + \bar{t}(-I_{n} + pM_{1})
\end{align*}

and
\begin{align*}
(Y(I_{n} + \bar{t}pM_{2}) + \bar{t}(-I_{n} + pM_{1}))(I_{n} - \bar{t}pM_{2}) &= Y + \bar{t}(-I_{n} + pM_{1})(I_{n} - \bar{t}pM_{2}) \\
&= Y + \bar{t}(-I_{n} + pM_{1}) + \bar{t}^{2}pM_{2}.
\end{align*}

Thus, since any sum of an invertible matrix and a multiple of $pI_{n}$ is invertible in $\M_{n}(R)$, we have
\begin{align*}
\cok_{R}(X - \bar{t}I_{n}) &= \cok_{R}(Y(I_{n} + pM_{1})(I_{n} + pYM_{2}) - \bar{t}I_{n}) \\
&\simeq \cok_{R}(Y + \bar{t}(-I_{n} + pM_{1}) + \bar{t}^{2}pM_{2}) \\
&\simeq G,
\end{align*}

as desired.

\

\hspace{3mm} For $k = 2$ with general $d \geq 1$, the map
\[\left\{\begin{array}{c}
Y \in \M_{n}(\bZ/p^{2}\bZ) : \\
\cok_{R}(Y + \bar{t}(-I_{n} + pM_{1}) + \bar{t}^{2}pM_{2} + \cdots + \bar{t}^{d-1}pM_{d-1}) \simeq G, \\
Y \equiv \bar{X} \pmod{p}
\end{array}\right\} \ra \left\{\begin{array}{c}
X \in \M_{n}(\bZ/p^{2}\bZ) : \\
\cok_{R}(X - \bar{t}I_{n}) \simeq G, \\
X \equiv \bar{X} \pmod{p}
\end{array}\right\}\]

given by $Y \mapsto X = Y(I_{n} + pY^{d-2}M_{d-1})(I_{n} + pY^{d-3}M_{d-2}) \cdots (I_{n} + pYM_{2})(I_{n} + pM_{1})$ turns out to be a well-defined bijection, showing (\ref{goal}) for this specific case. (Showing that this is well-defined is tantamount to the proof of Lemma \ref{final1}, which is a step in showing Lemma \ref{final}.) However, for $k \geq 2$, it is difficult to write down an explicit bijection unless $d = 2$. Hence, we use an indirect approach, using induction on $k$.
\end{rmk}

\

\subsection{Proof of Lemma \ref{final}} Recall that we fix a finite size $R$-module $G$, where $R = (\bZ/p^{k}\bZ)[t]/(P(t))$ for fixed $k \in \bZ_{\geq 1}$. In particular, we have $p^{k}G = 0$. The proof of Lemma \ref{final} is by induction on $k$, and the following lemma (eventually used as Corollary \ref{final4} and Remark \ref{crucial}) is crucial for the induction step:

\begin{lem}\label{final1} Fix $X' \in \M_{n}(\bZ/p^{k-1}\bZ)$ such that $\cok_{R/p^{k-1}R}(X' - \bar{t}I_{n}) \simeq G/p^{k-1}G$. Fix any $pM_{1}, \dots, pM_{d-1} \in p\M_{n}(\bZ/p^{k}\bZ)$ and $p^{k-1}M'_{1}, \dots, p^{k-1}M'_{d-1} \in p^{k-1}\M_{n}(\bZ/p^{k}\bZ)$. We have
\begin{align*}
&\#\left\{\begin{array}{c}
Y \in \M_{n}(\bZ/p^{k}\bZ) : \cok_{R}(Y + \bar{t}(-I_{n} + pM_{1} + p^{k-1}M'_{1}) + \sum_{j=2}^{d-1}\bar{t}^{j}(pM_{j} + p^{k-1}M'_{j}) \simeq G, \\
Y \equiv X' \pmod{p^{k-1}}
\end{array}\right\} \\
&= \#\left\{\begin{array}{c}
X \in \M_{n}(\bZ/p^{k}\bZ) : \cok_{R}(X + \bar{t}(-I_{n} + pM_{1}) + \sum_{j=2}^{d-1}\bar{t}^{j}pM_{j}) \simeq G, \\
X \equiv X' \pmod{p^{k-1}}
\end{array}\right\}
\end{align*}
\end{lem}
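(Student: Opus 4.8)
The plan is to prove Lemma~\ref{final1} by exhibiting an explicit bijection between its two sets, realized by left-multiplying the matrix
\[A := Y + \bar{t}(-I_n + pM_1 + p^N M'_1) + \bar{t}^2(pM_2 + p^N M'_2) + \cdots + \bar{t}^{d-1}(pM_{d-1} + p^N M'_{d-1})\]
by a unit $U \in \GL_n(R)$ assembled from $d-1$ unipotent factors. I would assume $N \geq 1$ throughout (the case $N = 0$, where $\bZ/p^N\bZ$ is the zero ring and $X'$ is forced, is degenerate and can be handled separately). Fix a lift $\hat{X} \in \Mat_n(\bZ/p^{N+1}\bZ)$ of $\bar{X} := X' \bmod p$. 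The target is to build $U$ with $UA = X + \bar{t}(-I_n + pM_1) + \bar{t}^2 pM_2 + \cdots + \bar{t}^{d-1}pM_{d-1}$ for a matrix of the form $X = Y + p^N C\hat{X}$, where $C$ is a \emph{fixed} matrix independent of $Y$; then $Y \mapsto X$ is a translation of the coset $\{Z : Z \equiv X' \bmod p^N\}$, hence a bijection of it, and since $\cok_R(A) \simeq \cok_R(UA)$ it carries the left-hand set of Lemma~\ref{final1} bijectively onto the right-hand one.

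The mechanism I would exploit is that, once $Y$ is fixed modulo $p^N$, every $p^N$-multiple is ``frozen'': $p^N Y = p^N\hat{X}$, and more usefully $p^N A = p^N(\hat{X} - \bar{t}I_n)$, since multiplying through by $p^N$ kills every term carrying a factor of $p$ (because $p^{N+1} = 0$ in $\bZ/p^{N+1}\bZ$), leaving only $p^N Y = p^N\hat{X}$ and the $-p^N\bar{t}I_n$ coming from $-I_n$ in the $\bar{t}$-coefficient; the same identity persists after any left-multiplication by a matrix of shape $I_n + p^N(\cdots)$. Hence for $1 \leq k \leq d-1$ and any $C_k$, left-multiplication by $U_k := I_n + p^N \bar{t}^{k-1}C_k$ has the clean effect $B \mapsto B + p^N\bar{t}^{k-1}C_k\hat{X} - p^N\bar{t}^k C_k$: only the powers $\bar{t}^{k-1}$ and $\bar{t}^k$ appear (both of degree $< d$, so no reduction via $P(\bar{t})$), so $U_k$ adds $p^N C_k\hat{X}$ to the $\bar{t}^{k-1}$-coefficient and subtracts $p^N C_k$ from the $\bar{t}^k$-coefficient while touching nothing else. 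Each $U_k$ is invertible with inverse $I_n - p^N\bar{t}^{k-1}C_k$ since $p^{2N} = 0$, so any product of the $U_k$ lies in $\GL_n(R)$.

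I would then run the elimination by applying $U_{d-1}, U_{d-2}, \dots, U_1$ in that order, choosing $C_k$ greedily so as to cancel exactly the ``junk'' in the current $\bar{t}^k$-coefficient — which is $p^N M'_k$ together with the $p^N C_{k+1}\hat{X}$ deposited there by the preceding step (and nothing extra for $k = d-1$). That junk is a $p^N$-multiple, so a valid $C_k$ exists, and a downward induction yields the recursion $C_k = M'_k + C_{k+1}\hat{X}$ (with $C_d := 0$), showing that each $C_k$, and hence $C := C_1$, depends only on $\bar{X}$ and $M'_1,\dots,M'_{d-1}$ and not on $Y$. Since $U_{k'}$ with $k' < k$ affects only the $\bar{t}^{k'-1}$- and $\bar{t}^{k'}$-coefficients, a coefficient once fixed is never disturbed again, and only $U_1$ touches the constant coefficient, turning it from $Y$ into $X = Y + p^N C_1\hat{X}$. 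Since this computation goes through verbatim, with the same $U$, for every $Z$ in the coset $\{Z : Z \equiv X' \bmod p^N\}$, the argument closes as described in the first paragraph.

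I expect the main obstacle to be not any individual computation but making transparent that $X - Y = p^N C_1\hat{X}$ does not depend on $Y$ — this is exactly what promotes $Y \mapsto X$ to a bijection, and it is a direct consequence of the ``freezing'' above, which is in turn the reason it pays to prove the lemma under the relaxed hypothesis $p^{N+1}H = 0$ rather than $p^N H = 0$ (one needs the extra layer to absorb the $p^N M'_i$). The rest is bookkeeping: running the eliminations from $k = d-1$ down to $1$ so that the corrections cascade downward without re-polluting higher coefficients and without ever wrapping around through $P(\bar{t})$. For $d \leq 2$ this cascade is empty and one recovers the substitution of \cite[Section~3.2]{CK}; for $d \geq 3$ the genuinely new feature is precisely the recursion $C_k = M'_k + C_{k+1}\hat{X}$, whose nonlinearity in the data hides in the products $C_{k+1}\hat{X}$.
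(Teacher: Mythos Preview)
Your proposal is correct and rests on the same mechanism as the paper's proof: both arguments eliminate the extra $p^{N}M'_{i}$'s by multiplying the $R$-matrix by a product of unipotent factors of the shape $I_{n}+p^{N}(\cdots)$, exploiting that $p^{N}A=p^{N}(\hat{X}-\bar{t}I_{n})$ is ``frozen'' (depends only on $\bar{X}$), so that the induced map on the coset $\{Z:\ Z\equiv X'\pmod{p^{N}}\}$ is a bijection.

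The execution differs in packaging. The paper \emph{right}-multiplies and defines the bijection as
\[
Y\ \longmapsto\ X \;=\; Y\,(I_{n}+p^{N}Y^{d-2}M'_{d-1})\cdots(I_{n}+p^{N}M'_{1}),
\]
then verifies the cokernel condition by a cascade of $\sim$-steps that first undo these factors and then use further right-multiplications by $I_{n}-\bar{t}^{k-1}p^{N}M'_{k}$ to push each $p^{N}M'_{k}$ into the $\bar{t}^{k}$-coefficient. Your version \emph{left}-multiplies by $U=U_{1}\cdots U_{d-1}$ with $U_{k}=I_{n}+p^{N}\bar{t}^{k-1}C_{k}$ and the recursion $C_{k}=M'_{k}+C_{k+1}\hat{X}$, obtaining the bijection directly as the translation $Y\mapsto Y+p^{N}C_{1}\hat{X}$. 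This is a genuine simplification: the map is visibly a translation from the outset (no appeal to ``$p^{N}Y^{m}$ depends only on $\bar{X}^{m}$'' is needed to see bijectivity), and the cokernel equality comes for free from $UA$ rather than from a separate chain of equivalences. Conversely, the paper's formulation has the mild advantage that the factors defining $X$ live in $\GL_{n}(\bZ/p^{N+1}\bZ)$ rather than $\GL_{n}(R)$, which makes the connection to the $d=2$ bijection of \cite[Section~3.2]{CK} more transparent. Either way, unwinding both formulas gives the same translation up to the left/right swap: your shift is $p^{N}\sum_{j=1}^{d-1}M'_{j}\hat{X}^{j}$, the paper's is $p^{N}\sum_{j=1}^{d-1}\hat{X}^{j}M'_{j}$.
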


\begin{proof} Consider the map
\begin{align*}
&\left\{\begin{array}{c}
Y \in \M_{n}(\bZ/p^{k}\bZ) : \cok_{R}(Y + \bar{t}(-I_{n} + pM_{1} + p^{k-1}M'_{1}) + \sum_{j=2}^{d-1}\bar{t}^{j}(pM_{j} + p^{k-1}M'_{j}) \simeq G, \\
Y \equiv X' \pmod{p^{k-1}}\end{array}\right\} \\
&\ra \left\{\begin{array}{c}
X \in \M_{n}(\bZ/p^{k}\bZ) : \cok_{R}(X + \bar{t}(-I_{n} + pM_{1}) + \sum_{j=2}^{d-1}\bar{t}^{j}pM_{j}) \simeq G, \\
X \equiv X' \pmod{p^{k-1}}
\end{array}\right\}
\end{align*}

given by
\[Y \mapsto X = Y(I_{n} + p^{k-1}Y^{d-2}M'_{d-1}) (I_{n} + p^{k-1}Y^{d-3}M'_{d-2}) \cdots (I_{n} + p^{k-1}Y^{2}M'_{3}) (I_{n} + p^{k-1}YM'_{2}) (I_{n} + p^{k-1}M'_{1}).\]

Note that $X \equiv Y \equiv X' \pmod{p^{k-1}}$. To show that this map is well-defined, we need to show that
\[\cok_{R}(X  + \bar{t}(-I_{n} + pM_{1}) + \bar{t}^{2}pM_{2} + \cdots + \bar{t}^{d-1}pM_{d-1}) \simeq G.\]

Once this is shown, we can immediately conclude that the map is a bijection because we work over $\bZ/p^{k}\bZ$: for any $m \in \bZ_{\geq 0}$, the matrix $p^{k-1}Y^{m}$ only depends on the $m$-th power of the residue class of $Y$ modulo $p$, which only depends on $X'^{m}$. Given any $A, B \in \M_{n}(R)$, we shall write $A \sim B$ when $B = QAQ'$ for some $Q, Q' \in \GL_{n}(R)$. In particular, if $A \sim B$, then $\cok_{R}(A) \simeq \cok_{R}(B)$. Since we work over $\bZ/p^{k}\bZ$, we have $p^{k} = 0$, so
\begin{align*}
&X  + \bar{t}(-I_{n} + pM_{1}) + \bar{t}^{2}pM_{2} + \cdots + \bar{t}^{d-1}pM_{d-1} \\
&= Y(I_{n} + p^{k-1}Y^{d-2}M'_{d-1}) (I_{n} + p^{k-1}Y^{d-3}M'_{d-2}) \cdots  (I_{n} + p^{k-1}M'_{1}) + \bar{t}(-I_{n} + pM_{1}) + \bar{t}^{2}pM_{2} + \cdots + \bar{t}^{d-1}pM_{d-1} \\
&\sim Y + \bar{t}(-I_{n} + pM_{1})(I_{n} - p^{k-1}M'_{1})(I_{n} - p^{k-1}YM'_{2}) \cdots (I_{n} - p^{k-1}Y^{d-2}M'_{d-1}) + \bar{t}^{2}pM_{2} + \cdots + \bar{t}^{d-1}pM_{d-1} \\
&= Y +  \bar{t}(-I_{n} + pM_{1} + p^{k-1}M'_{1} +  p^{k-1}YM'_{2} + p^{k-1}Y^{2}M'_{3} + \cdots + p^{k-1}Y^{d-2}M'_{d-1}) + \bar{t}^{2}pM_{2} + \cdots + \bar{t}^{d-1}pM_{d-1} \\
&= Y(I_{n} + \bar{t}p^{k-1}M'_{2}) + \bar{t}(-I_{n} + pM_{1} + p^{k-1}M'_{1} + p^{k-1}Y^{2}M'_{3} + \cdots + p^{k-1}Y^{d-2}M'_{d-1}) + \bar{t}^{2}pM_{2} + \cdots + \bar{t}^{d-1}pM_{d-1} \\
&\sim Y + \bar{t}(-I_{n} + pM_{1} + p^{k-1}M'_{1} + p^{k-1}Y^{2}M'_{3} + \cdots + p^{k-1}Y^{d-2}M'_{d-1})(I_{n} - \bar{t}p^{k-1}M'_{2}) + \bar{t}^{2}pM_{2} + \bar{t}^{3}pM_{3} + \cdots + \bar{t}^{d-1}pM_{d-1} \\
&= Y + \bar{t}(-I_{n} + pM_{1} + p^{k-1}M'_{1} + p^{k-1}Y^{2}M'_{3} + \cdots + p^{k-1}Y^{d-2}M'_{d-1}) + \bar{t}^{2}(pM_{2} + p^{k-1}M'_{2}) + \bar{t}^{3}pM_{3} + \cdots + \bar{t}^{d-1}pM_{d-1},
\end{align*}
where
\bi
	\item the first $\sim$ is given by multiplying $(I_{n} - p^{k-1}M'_{1})(I_{n} - p^{k-1}YM'_{2}) \cdots (I_{n} - p^{k-1}Y^{d-2}M'_{d-1})$ on the right to the previous line, and
	\item the second $\sim$ is given by multiplying $(I_{n} - \bar{t}p^{k-1}M'_{2})$ on the right to the previous line.
\ei
Continuing this computation, we end up having
\begin{align*}
&X  + \bar{t}(-I_{n} + pM_{1}) + \bar{t}^{2}pM_{2} + \cdots + \bar{t}^{d-1}pM_{d-1} \\
&\sim Y + \bar{t}(-I_{n} + pM_{1} + p^{k-1}M'_{1}) + \bar{t}^{2}(pM_{2} + p^{k-1}M'_{2}) +  \bar{t}^{3}(pM_{3} + p^{k-1}M'_{3}) + \cdots + \bar{t}^{d-1}(pM_{d-1} + p^{k-1}M'_{d-1}).
\end{align*}

Thus, we have
\begin{align*}
&\cok_{R}(X  + \bar{t}(-I_{n} + pM_{1}) + \bar{t}^{2}pM_{2} + \cdots + \bar{t}^{d-1}pM_{d-1}) \\
&\simeq \cok_{R}(Y + \bar{t}(-I_{n} + pM_{1} + p^{k-1}M'_{1}) + \bar{t}^{2}(pM_{2} + p^{k-1}M'_{2}) +  \bar{t}^{3}(pM_{3} + p^{k-1}M'_{3}) + \cdots + \bar{t}^{d-1}(pM_{d-1} + p^{k-1}M'_{d-1})) \\
&\simeq G,
\end{align*}
as desired.
\end{proof}

\

\hspace{3mm} We use Lemma \ref{final1} in the form of the following statement. This lets us translate the problem of counting matrices in $\M_{n}(\bZ/p^{k}\bZ)$ to a problem of counting matrices in $\M_{n}(R)$.

\begin{cor}\label{final2} Fix $X' \in \M_{n}(\bZ/p^{k-1}\bZ)$ such that  $\cok_{R/p^{k-1}R}(X' - \bar{t}I_{n}) \simeq G/p^{k-1}G$. Given any $pM_{1}, \dots, pM_{d-1} \in p\M_{n}(\bZ/p^{k}\bZ)$, we have

\begin{align*}
&\#\left\{\begin{array}{c}
Z \in \M_{n}(R) : \cok_{R}(Z) \simeq G \text{ and} \\
Z \equiv X' + \bar{t}(-I_{n} + pM_{1}) + \sum_{j=2}^{d-1}\bar{t}^{j}pM_{j} \pmod{p^{k-1}}
\end{array}\right\} \\
&= p^{n^{2}(d-1)} \cdot \#\left\{\begin{array}{c}
X \in \M_{n}(\bZ/p^{k}\bZ) : \\
\cok_{R}(X + \bar{t}(-I_{n} + pM_{1}) + \sum_{j=2}^{d-2}\bar{t}^{j}pM_{j}) \simeq G \\
\text{and } X \equiv X' \pmod{p^{k-1}}
\end{array}\right\}.
\end{align*}
\end{cor}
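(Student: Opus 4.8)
The plan is to deduce Corollary \ref{final2} from Lemma \ref{final1} by reorganizing the left-hand count according to the $\bar t$-expansion of $Z$. First I would write a general $Z \in \Mat_{n}(R)$ as $Z = Z_{0} + \bar t Z_{1} + \cdots + \bar t^{d-1} Z_{d-1}$ with each $Z_{i} \in \Mat_{n}(\bZ/p^{N+1}\bZ)$, and observe that the congruence $Z \equiv X' + \bar t(-I_{n} + pM_{1}) + \bar t^{2}pM_{2} + \cdots + \bar t^{d-1}pM_{d-1} \pmod{p^{N}}$ is equivalent, term by term, to $Z_{0} \equiv X' \pmod{p^{N}}$, $Z_{1} \equiv -I_{n} + pM_{1} \pmod{p^{N}}$, and $Z_{i} \equiv pM_{i} \pmod{p^{N}}$ for $2 \leq i \leq d-1$. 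Hence I may parametrize the matrices $Z$ satisfying the prescribed reduction by setting $Z_{0} = Y$ with $Y \equiv X' \pmod{p^{N}}$, and $Z_{1} = -I_{n} + pM_{1} + p^{N}M'_{1}$, $Z_{i} = pM_{i} + p^{N}M'_{i}$ for $2 \leq i \leq d-1$, where each $p^{N}M'_{i}$ runs over $p^{N}\Mat_{n}(\bZ/p^{N+1}\bZ)$; the data $(Y, p^{N}M'_{1}, \dots, p^{N}M'_{d-1})$ then determine $Z$ bijectively, so the left-hand set of the corollary is in bijection with the disjoint union, over all tuples $(p^{N}M'_{1}, \dots, p^{N}M'_{d-1}) \in (p^{N}\Mat_{n}(\bZ/p^{N+1}\bZ))^{d-1}$, of the corresponding sets of $Y$.

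Under this parametrization $\cok_{R}(Z) = \cok_{R}(Y + \bar t(-I_{n} + pM_{1} + p^{N}M'_{1}) + \bar t^{2}(pM_{2} + p^{N}M'_{2}) + \cdots + \bar t^{d-1}(pM_{d-1} + p^{N}M'_{d-1}))$, so each piece of the disjoint union is exactly the set whose cardinality appears on the left-hand side of Lemma \ref{final1}. Since the hypotheses $p^{N+1}H = 0$ and $\cok_{R/p^{N}R}(X' - \bar t I_{n}) \simeq H/p^{N}H$ are precisely those in force, Lemma \ref{final1} applies and tells us that each of these cardinalities equals $\#\{X \in \Mat_{n}(\bZ/p^{N+1}\bZ) : \cok_{R}(X + \bar t(-I_{n} + pM_{1}) + \bar t^{2}pM_{2} + \cdots + \bar t^{d-1}pM_{d-1}) \simeq H,\ X \equiv X' \pmod{p^{N}}\}$, independently of the chosen tuple. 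As there are $|p^{N}\Mat_{n}(\bZ/p^{N+1}\bZ)|^{d-1} = p^{n^{2}(d-1)}$ such tuples, summing yields the asserted identity.

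This argument is purely a repackaging of Lemma \ref{final1}, so I do not expect a genuine obstacle here; the points that need care are the term-by-term translation of the mod-$p^{N}$ congruence on $Z$ into congruences on its components $Z_{0}, \dots, Z_{d-1}$, and the verification that $(Y, p^{N}M'_{1}, \dots, p^{N}M'_{d-1}) \mapsto Z$ is a bijection onto the matrices over $R$ with the prescribed reduction. All the substantive content — the insensitivity of the count to the auxiliary matrices $p^{N}M'_{i}$ — has already been established in Lemma \ref{final1}.
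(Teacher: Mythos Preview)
Your proposal is correct and follows essentially the same route as the paper: both arguments expand $Z$ in the $\bar t$-basis, rewrite the left-hand count as a sum over tuples $(p^{N}M'_{1},\dots,p^{N}M'_{d-1}) \in (p^{N}\Mat_{n}(\bZ/p^{N+1}\bZ))^{d-1}$, invoke Lemma~\ref{final1} to see that each summand is independent of the tuple, and multiply by $p^{n^{2}(d-1)}$. Your write-up is slightly more explicit about the bijection $(Y,p^{N}M'_{1},\dots,p^{N}M'_{d-1})\leftrightarrow Z$, but the content is identical.
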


\begin{proof} We have
\begin{align*}
&\#\left\{\begin{array}{c}
Z \in \M_{n}(R) : \cok_{R}(Z) \simeq G \text{ and} \\
Z \equiv X' + \bar{t}(-I_{n} + pM_{1}) + \sum_{j=2}^{d-1}\bar{t}^{j}pM_{j} \pmod{p^{k-1}}
\end{array}\right\} \\
&= \sum_{\substack{p^{k-1}M_{1}', \dots, p^{k-1}M_{d-1}' \\ \in p^{k-1}\M_{n}(\bZ/p^{k}\bZ)}}  \#\left\{\begin{array}{c}
X \in \M_{n}(\bZ/p^{k}\bZ) : \\
\cok_{R}(X + \bar{t}(-I_{n} + pM_{1} + p^{k-1}M_{1}') + \sum_{j=2}^{d-1}\bar{t}^{j}(pM_{j} + p^{k-1}M_{j}')) \simeq G \\
\text{and } X \equiv X' \pmod{p^{k-1}}
\end{array}\right\} \\
&= \sum_{\substack{p^{k-1}M_{1}', \dots, p^{k-1}M_{d-1}' \\ \in p^{k-1}\M_{n}(\bZ/p^{k}\bZ)}} \#\left\{\begin{array}{c}
X \in \M_{n}(\bZ/p^{k}\bZ) : \\
\cok_{R}(X + \bar{t}(-I_{n} + pM_{1}) + \sum_{j=2}^{d-1} \bar{t}^{j}pM_{j} \simeq G \\
\text{and } X \equiv X' \pmod{p^{k-1}}
\end{array}\right\},
\end{align*}

the last equality of which is given by applying Lemma \ref{final1}. Since the last summand does not depend on what we sum over, the result follows.
\end{proof}

\

\hspace{3mm} For our purpose, we need to manipulate Corollary \ref{final2}, which is already a manipulation of Lemma \ref{final1}, once more. In order to do so, we need the following lemma:

\begin{lem}\label{final3} Fix any $A, B \in \M_{n}(R/p^{k-1}R)$ such that $\cok_{R/p^{k-1}R}(A) \simeq G/p^{k-1}G \simeq \cok_{R/p^{k-1}R}(B)$. Then
\[\#\left\{\begin{array}{c}
Z \in \M_{n}(R) : \cok_{R}(Z) \simeq G \text{ and} \\
Z \equiv A \pmod{p^{k-1}}
\end{array}\right\} = \#\left\{\begin{array}{c}
Z \in \M_{n}(R) : \cok_{R}(Z) \simeq G \text{ and} \\
Z \equiv B \pmod{p^{k-1}}
\end{array}\right\}.\]
\end{lem}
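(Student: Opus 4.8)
The plan is to exhibit an explicit bijection between the two sets by a two-sided change of basis, exploiting that $S := R/p^{N}R$ is a finite chain ring. When $N = 0$ the ring $S$ is zero, both congruence conditions are vacuous, and both sides count $\{Z \in \Mat_{n}(R) : \cok_{R}(Z) \simeq H\}$, so there is nothing to prove; from now on assume $N \geq 1$. Since $R$ is a quotient of the DVR $\bZ_{p}[t]/(P(t))$, whose uniformizer is $p$ (as noted after the statement of Theorem \ref{main}), the ring $S$ is a finite local principal ideal ring: its only ideals are $0 = (p^{N}) \subset (p^{N-1}) \subset \cdots \subset (p) \subset (1) = S$.

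First I would record the Smith normal form over $S$: every $A \in \Mat_{n}(S)$ can be written as $A = U_{0}\,\diag(p^{a_{1}}, \dots, p^{a_{n}})\,V_{0}$ with $U_{0}, V_{0} \in \GL_{n}(S)$ and integers $0 \leq a_{1} \leq \cdots \leq a_{n} \leq N$. This follows by the usual row/column reduction: among the nonzero entries of $A$ pick one of least $p$-valuation, move it to the $(1,1)$ position by permutation matrices, and use that in a chain ring this entry generates an ideal containing every other entry of $A$, so its row and column can be cleared by elementary operations; then induct on $n$. Consequently $\cok_{S}(A) \simeq \bigoplus_{i=1}^{n} S/(p^{a_{i}})$, so by the structure theorem for modules over $S$ the multiset $\{a_{1}, \dots, a_{n}\}$ is determined by the isomorphism class of $\cok_{S}(A)$ together with $n$. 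In particular, from $\cok_{S}(A) \simeq H/p^{N}H \simeq \cok_{S}(B)$ we obtain $B = UAV$ for suitable $U, V \in \GL_{n}(S)$.

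Next I would lift $U$ and $V$. Because $N \geq 1$, the kernel $p^{N}R$ of the reduction $R \tra S$ satisfies $(p^{N}R)^{2} = p^{2N}R = 0$, hence is nilpotent, so $\GL_{n}(R) \tra \GL_{n}(S)$ is surjective: an entrywise lift of a matrix in $\GL_{n}(S)$ has determinant reducing to a unit of $S$, hence (being a unit modulo a nilpotent ideal) a unit of $R$. Choose $\widetilde{U}, \widetilde{V} \in \GL_{n}(R)$ lifting $U, V$. Then $Z \mapsto \widetilde{U} Z \widetilde{V}$ is a bijection of $\Mat_{n}(R)$ onto itself, it preserves the isomorphism type of $\cok_{R}$, and it sends any $Z$ with $Z \equiv A \pmod{p^{N}}$ to a matrix with $\widetilde{U}Z\widetilde{V} \equiv UAV = B \pmod{p^{N}}$, the inverse map $Z \mapsto \widetilde{U}^{-1}Z\widetilde{V}^{-1}$ doing the reverse. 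Therefore it restricts to a bijection between the two sets in the statement, which yields the claimed equality of cardinalities.

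The only genuinely non-formal point is the Smith normal form over $S$, i.e.\ that over a finite chain ring two $n \times n$ matrices with isomorphic cokernels are $\GL_{n} \times \GL_{n}$-equivalent; the subtlety is merely that $S$ is not a domain, but the reduction algorithm uses only that divisibility among principal ideals of $S$ is a total preorder (governed by the $p$-valuation), so no Euclidean algorithm is needed and the classical argument applies verbatim. Everything else — lifting units along a nilpotent ideal and transporting the counting problem by a two-sided multiplication — is routine.
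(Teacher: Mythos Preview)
Your proof is correct and follows essentially the same approach as the paper: both arguments use the Smith normal form over the chain ring $R/p^{N}R$ together with the lifting of invertible matrices along the nilpotent surjection $R \tra R/p^{N}R$ to transport the counting problem by a two-sided $\GL_{n}$-action. The only cosmetic difference is that the paper reduces $A$ and $B$ separately to the common diagonal matrix $\diag(p^{\ld_{1}},\dots,p^{\ld_{r}},1,\dots,1)$, whereas you compose the two equivalences to pass directly from $A$ to $B$.
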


\begin{proof} By the paragraph before Lemme \ref{R}, we have
\[G \simeq (R/p^{e_{1}}R)^{r_{1}} \times (R/p^{e_{2}}R)^{r_{2}} \times \cdots \times (R/p^{e_{s}}R)^{r_{s}}\]
for some integers $k \geq e_{1} > e_{2} > \cdots > e_{s} \geq 1$ and $r_{1}, \dots, r_{s} \geq 1$. (The case $s = 0$ corresponds to trivial $G$.) Since $\cok_{R/p^{k-1}R}(A) \simeq G/p^{k-1}G$, there must be $Q, Q' \in \M_{n}(R/p^{k-1}R)$ such that
\[QAQ' = \diag(p^{e_{1}}, \dots, p^{e_{1}}, \dots, p^{e_{s}}, \dots, p^{e_{s}})\]
over $R/p^{k-1}R$, where each $p^{e_{j}}$ appears $r_{j}$ times in the diagonal matrix. Fix any lifts $\tilde{Q}, \tilde{Q}' \in \M_{n}(R)$ of $Q, Q' \in \M_{n}(R/p^{k-1}R)$. Then the lifts of $A$ correspond to the lifts of $QAQ'$ via the map $Z \mapsto \tilde{Q}Z\tilde{Q}'$, so
\[\#\left\{\begin{array}{c}
Z \in \M_{n}(R) : \cok_{R}(Z) \simeq G \text{ and} \\
Z \equiv A \pmod{p^{k-1}}
\end{array}\right\} = \#\left\{\begin{array}{c}
Z \in \M_{n}(R) : \cok_{R}(Z) \simeq G \text{ and} \\
Z \equiv \diag(p^{e_{1}}, \dots, p^{e_{1}}, \dots, p^{e_{s}}, \dots, p^{e_{s}}) \pmod{p^{k-1}}
\end{array}\right\}.\]
Replacing the role of $A$ and $B$, we get the desired identity.
\end{proof}

\

\hspace{3mm} The following is a more flexible form of Lemma \ref{final1} we need for the proof of Lemma \ref{final}. This follows form Corollary \ref{final2}, which follows from Lemma \ref{final1}:

\begin{cor}\label{final4} Given any $pM_{1}, \dots, pM_{d-1} \in p\M_{n}(\bZ/p^{k}\bZ)$,  fix any $X', X'' \in \M_{n}(\bZ/p^{k-1}\bZ)$ such that 
\[\cok_{R/p^{k-1}R}(X' + \bar{t}(-I_{n} + p\bar{M}_{1}) + \bar{t}^{2}p\bar{M}_{2} + \cdots + \bar{t}^{d-1}p\bar{M}_{d-1}) \simeq G/p^{k-1}G \simeq \cok_{R/p^{k-1}R}(X'' - \bar{t}I_{n}),\]

where $p\bar{M}_{i} \in p\M_{n}(R/p^{k-1}R)$ is $pM_{i}$ modulo $p^{k-1}$. Then
\[\#\left\{\begin{array}{c}
X \in \M_{n}(\bZ/p^{k}\bZ) : \\
\cok_{R}(X + \bar{t}(-I_{n} + pM_{1}) + \bar{t}^{2}pM_{2} + \cdots + \bar{t}^{d-1}pM_{d-1}) \simeq G \\
\text{and } X \equiv X' \pmod{p^{k-1}}
\end{array}\right\} = \#\left\{\begin{array}{c}
X \in \M_{n}(\bZ/p^{k}\bZ) : \\
\cok_{R}(X - \bar{t}I_{n}) \simeq G \\
\text{and } X \equiv X'' \pmod{p^{k-1}}
\end{array}\right\}.\]
\end{cor}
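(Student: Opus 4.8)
The plan is to obtain Corollary \ref{final4} by a purely formal chaining of the results already established in this subsection: Corollary \ref{final2} in both ``directions'', glued together by Lemma \ref{final3}. No new matrix manipulation is needed, since all the substantive work has been done in Lemma \ref{final1} (hence Corollary \ref{final2}) and in Lemma \ref{final3}.

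First I would apply Corollary \ref{final2} to the left-hand side, with the given $pM_{1}, \dots, pM_{d-1}$ and with $X'$; what is needed to run that argument is precisely that the residue $A := X' + \bar t(-I_{n} + p\bar M_{1}) + \bar t^{2}p\bar M_{2} + \cdots + \bar t^{d-1}p\bar M_{d-1} \in \Mat_{n}(R/p^{N}R)$ has $\cok_{R/p^{N}R}(A) \simeq H/p^{N}H$, which is exactly our hypothesis. This rewrites
\[\#\left\{ X \in \Mat_{n}(\bZ/p^{N+1}\bZ) : \cok_{R}(X + \bar t(-I_{n} + pM_{1}) + \cdots + \bar t^{d-1}pM_{d-1}) \simeq H,\ X \equiv X' \!\!\!\pmod{p^{N}} \right\}\]
as $p^{-n^{2}(d-1)}$ times $\#\{Z \in \Mat_{n}(R) : \cok_{R}(Z) \simeq H,\ Z \equiv A \pmod{p^{N}}\}$. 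Next I would apply Corollary \ref{final2} a second time, now in the special case $M_{1} = \cdots = M_{d-1} = 0$ and with $X''$ (here the hypothesis needed is $\cok_{R/p^{N}R}(X'' - \bar t I_{n}) \simeq H/p^{N}H$, again supplied by assumption): this identifies $\#\{X \in \Mat_{n}(\bZ/p^{N+1}\bZ) : \cok_{R}(X - \bar t I_{n}) \simeq H,\ X \equiv X'' \pmod{p^{N}}\}$ with $p^{-n^{2}(d-1)}$ times $\#\{Z \in \Mat_{n}(R) : \cok_{R}(Z) \simeq H,\ Z \equiv B \pmod{p^{N}}\}$, where $B := X'' - \bar t I_{n}$.

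Finally I would invoke Lemma \ref{final3} with these $A$ and $B$: since $\cok_{R/p^{N}R}(A) \simeq H/p^{N}H \simeq \cok_{R/p^{N}R}(B)$ by hypothesis, the two counts of $Z \in \Mat_{n}(R)$ coincide. Cancelling the common factor $p^{-n^{2}(d-1)}$ from the two previous steps then gives exactly the identity in Corollary \ref{final4}. The only thing that requires care is the bookkeeping between residues modulo $p^{N}$ and modulo $p^{N+1}$: one must check that the reduction modulo $p^{N}$ of the relevant matrix over $\bZ/p^{N+1}\bZ$ is literally the matrix $A$ (resp. $B$) fed into Lemma \ref{final3}, and that setting all $M_{i} = 0$ in Corollary \ref{final2} genuinely produces the count attached to $\cok_{R}(X - \bar t I_{n})$. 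This is routine, and I expect no genuine obstacle beyond it.
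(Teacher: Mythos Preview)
Your proposal is correct and follows essentially the same route as the paper: apply Corollary \ref{final2} once with the given $pM_{i}$'s and $X'$, once with all $M_{i}=0$ and $X''$, and then equate the two resulting $Z$-counts via Lemma \ref{final3} with $A = X' + \bar{t}(-I_{n} + p\bar{M}_{1}) + \cdots + \bar{t}^{d-1}p\bar{M}_{d-1}$ and $B = X'' - \bar{t}I_{n}$. The paper's proof is just a terser statement of exactly this chaining.
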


\begin{proof} By Corollary \ref{final2}, the desired statement is equivalent to
\[\#\left\{\begin{array}{c}
Z \in \M_{n}(R) : \cok_{R}(Z) \simeq G, \\
Z \equiv X' + \bar{t}(-I_{n} + pM_{1}) + \bar{t}^{2}pM_{2} + \cdots + \bar{t}^{d-1}pM_{d-1} \pmod{p^{k-1}}
\end{array}\right\} = \#\left\{\begin{array}{c}
Z \in \M_{n}(R) : \cok_{R}(Z) \simeq G, \\
Z \equiv X'' - \bar{t}I_{n} \pmod{p^{k-1}}
\end{array}\right\},\]

which follows from Lemma \ref{final3} by taking $A = X' + \bar{t}(-I_{n} + p\bar{M}_{1}) + \bar{t}^{2}p\bar{M}_{2} + \cdots + \bar{t}^{d-1}p\bar{M}_{d-1}$ and $B = X'' - \bar{t}I_{n}$.
\end{proof}

\

\begin{rmk}\label{crucial} From now on, we shall write
\[c_{G,d,n,k} := \#\left\{\begin{array}{c}
X \in \M_{n}(\bZ/p^{k}\bZ) : \\
\cok_{R}(X + \bar{t}(-I_{n} + pM_{1}) + \bar{t}^{2}pM_{2} + \cdots + \bar{t}^{d-1}pM_{d-1}) \simeq G \\
\text{and } X \equiv X' \pmod{p^{k-1}}
\end{array}\right\}\]
because by Lemma \ref{final4}, this quantity does not depend on $X' \in \M_{n}(R/p^{k-1}R)$ nor $pM_{1}, \dots, pM_{d-1} \in p\M_{n}(R/p^{k-1}R)$ as long as
\[\cok_{R/p^{k-1}R}(X' + \bar{t}(-I_{n} + p\bar{M}_{1}) + \bar{t}^{2}p\bar{M}_{2} + \cdots + \bar{t}^{d-1}p\bar{M}_{d-1}) \simeq G/p^{k-1}G,\]
where $p\bar{M}_{i} \in p\M_{n}(R/p^{k-1}R)$ is $pM_{i}$ modulo $p^{k-1}$. Whenever we use the notation $c_{G,d,n,k}$ in the proof of Lemma \ref{final}, we are implicitly using Lemma \ref{final4}.
\end{rmk}

\

\hspace{3mm} We are now ready to prove Lemma \ref{final}:

\begin{proof}[Proof of Lemma \ref{final}] We proceed by induction on $k \geq 1$. Let $k=1$. Then the statement we consider trivially follows because on the left-hand side of the desired identity, all $pM_{i} = 0$ as we work over $\bZ/p\bZ$. Suppose that the statement holds over $\bZ/p\bZ, \dots, \bZ/p^{k-1}\bZ$, and we work over $\bZ/p^{k}\bZ$ to finish the induction. We have
\begin{align*}
\#\left\{\begin{array}{c}
X \in \M_{n}(\bZ/p^{k}\bZ) : \\
\cok_{R}(X - \bar{t}I_{n}) \simeq G \\
\text{and } X \equiv \bar{X} \pmod{p}
\end{array}\right\}  &= \sum_{\substack{X' \in \M_{n}(\bZ/p^{k-1}\bZ): \\ \cok_{R/p^{k-1}R}(X' - \bar{t}I_{n}) \simeq G/p^{k-1}G, \\ 
X' \equiv \bar{X} \pmod{p}}} \#\left\{\begin{array}{c}
X \in \M_{n}(\bZ/p^{k}\bZ) : \\
\cok_{R}(X - \bar{t}I_{n}) \simeq G \\
\text{and } X \equiv X' \pmod{p^{k-1}}
\end{array}\right\} \\
&= \sum_{\substack{X' \in \M_{n}(\bZ/p^{k-1}\bZ): \\ \cok_{R/p^{k-1}R}(X' - \bar{t}I_{n}) \simeq G/p^{k-1}G, \\ X' \equiv \bar{X} \pmod{p}}} c_{G,d,n,k} \\
&=  c_{G,d,n,k} \cdot \#\left\{\begin{array}{c}
X' \in \M_{n}(\bZ/p^{k-1}\bZ) : \\
\cok_{R/p^{k-1}R}(X' - \bar{t}I_{n}) \simeq G/p^{k-1}G \\
\text{and } X' \equiv \bar{X} \pmod{p}
\end{array}\right\}.
\end{align*}

We may now use the induction hypothesis on the last quantity to obtain

\begin{align*}
&\#\left\{\begin{array}{c}
X \in \M_{n}(\bZ/p^{k}\bZ) : \\
\cok_{R}(X - \bar{t}I_{n}) \simeq G \\
\text{and } X \equiv \bar{X} \pmod{p}
\end{array}\right\} \\
&=  c_{G,d,n,k} \cdot \#\left\{\begin{array}{c}
X' \in \M_{n}(\bZ/p^{k-1}\bZ) : \\
\cok_{R/p^{N}R}(X' - \bar{t}I_{n}) \simeq G/p^{k-1}G \\
\text{and } X' \equiv \bar{X} \pmod{p}
\end{array}\right\} \\
&=  c_{G,d,n,k} \cdot \#\left\{\begin{array}{c}
X' \in \M_{n}(\bZ/p^{k-1}\bZ) : \\
\cok_{R/p^{k-1}R}(X' + \bar{t}(-I_{n} + p\bar{M}_{1}) + \bar{t}^{2}p\bar{M}_{2} + \cdots + \bar{t}^{d-1}p\bar{M}_{d-1}) \simeq G/p^{k-1}G \\
\text{and } X' \equiv \bar{X} \pmod{p}
\end{array}\right\},
\end{align*}

where $p\bar{M}_{i}$ is $pM_{i}$ modulo $p^{k-1}$ in $\M_{n}(\bZ/p^{k-1}\bZ)$. 

\

\hspace{3mm} Continuing the computation, we have

\begin{align*}
&\#\left\{\begin{array}{c}
X \in \M_{n}(\bZ/p^{k}\bZ) : \\
\cok_{R}(X - \bar{t}I_{n}) \simeq G \\
\text{and } X \equiv \bar{X} \pmod{p}
\end{array}\right\} \\
&=  c_{G,d,n,k} \cdot \#\left\{\begin{array}{c}
X' \in \M_{n}(\bZ/p^{k-1}\bZ) : \\
\cok_{R/p^{k-1}R}(X' + \bar{t}(-I_{n} + p\bar{M}_{1}) + \bar{t}^{2}p\bar{M}_{2} + \cdots + \bar{t}^{d-1}p\bar{M}_{d-1}) \simeq G/p^{k-1}G \\
\text{and } X' \equiv \bar{X} \pmod{p}
\end{array}\right\}\\
&= \sum_{\substack{X' \in \M_{n}(\bZ/p^{k-1}\bZ): \\ \cok_{R}(X' + \bar{t}(-I_{n}+p\bar{M}_{1}) + \bar{t}^{2}p\bar{M}_{2} + \cdots + \bar{t}^{d-1}p\bar{M}_{d-1}) \simeq G/p^{k-1}G, \\ X \equiv \bar{X} \pmod{p}}} c_{G,d,n,k} \\
&= \sum_{\substack{X' \in \M_{n}(\bZ/p^{k-1}\bZ): \\ \cok_{R}(X' + \bar{t}(-I_{n}+p\bar{M}_{1}) + \bar{t}^{2}p\bar{M}_{2} + \cdots + \bar{t}^{d-1}p\bar{M}_{d-1}) \simeq G/p^{k-1}G, \\ X \equiv \bar{X} \pmod{p}}} \#\left\{\begin{array}{c}
X \in \M_{n}(\bZ/p^{k}\bZ) : \\
\cok_{R}(X + \bar{t}(-I_{n} + p M_{1}) + \bar{t}^{2}p M_{2} + \cdots + \bar{t}^{d-1}p M_{d-1}) \simeq G \\
\text{and } X \equiv X' \pmod{p^{k-1}}
\end{array}\right\} \\
&= \#\left\{\begin{array}{c}
X \in \M_{n}(\bZ/p^{k}\bZ): \\
\cok_{R}(X + \bar{t}(-I_{n} + pM_{1}) + \bar{t}^{2}pM_{2} +\cdots + \bar{t}^{d-1}pM_{d-1}) \simeq G \\
\text{and } X \equiv \bar{X} \pmod{p}
\end{array}\right\}.
\end{align*}

This finishes the proof.
\end{proof}


\newpage

\end{document}